\newtheorem{thm}{Theorem}[section]
\newtheorem{lemma}[thm]{Lemma}
\newtheorem{prop}[thm]{Proposition}
\theoremstyle{definition}
\newtheorem*{rem}{Remark}
\newtheorem*{qu}{Question}
\theoremstyle{definition}
\newtheorem{defn}[thm]{Definition}
\DeclareMathOperator{\vol}{vol}
\DeclareMathOperator{\inter}{int}
\DeclareMathOperator{\conv}{conv}
\newcommand{\norm}[1]{{\left\|{#1}\right\|}}
\newcommand{\abs}[1]{{\left|{#1}\right|}}
\newcommand{\scal}[1]{{\left\langle{#1}\right\rangle}}
\newcommand{\set}[1]{{\left\{{#1}\right\}}}
\newcommand{\eps}{\ensuremath{\varepsilon}}
\newcommand{\ka}{\ensuremath{\kappa}}
\newcommand{\La}{\ensuremath{\Lambda}}
\newcommand{\la}{\ensuremath{\lambda}}
\newcommand{\sig}{\ensuremath{\sigma}}
\newcommand{\vphi}{\ensuremath{\varphi}}
\newcommand{\om}{\ensuremath{\omega}}
\newcommand{\Om}{\ensuremath{\Omega}}
\newcommand{\bears}{\begin{eqnarray*}}
\newcommand{\eears}{\end{eqnarray*}}
\newcommand{\mc}[1]{\ensuremath{\mathcal{#1}}}
\newcommand{\ZZ}{\ensuremath{\mathbb{Z}}}
\newcommand{\QQ}{\ensuremath{\mathbb{Q}}}
\newcommand{\RR}{\ensuremath{\mathbb{R}}}
\newcommand{\NN}{\ensuremath{\mathbb{N}}}
\newcommand{\TT}{\ensuremath{\mathbb{T}}}
\newcommand{\sm}{\ensuremath{\setminus}}
\newcommand{\rar}{\ensuremath{\rightarrow}}
\newcommand{\bs}[1]{\ensuremath{\mathbf{#1}}}
\providecommand{\abs}[1]{\lvert#1\rvert}
\numberwithin{equation}{section}
\title{Polyhedral Gauss Sums, and polytopes with symmetry}
\subjclass[2010]{Primary: 11L05, 52C22, 05B45. Secondary: 52B10, 52B15, 51M20}
\keywords{Gauss sum, lattice, Weyl group, multi-tiling, polyhedron, solid angle, Gram relations}
\date{\today}
\author{Romanos-Diogenes Malikiosis} 
\thanks{R.~D. Malikiosis is supported with a Postdoctoral Fellowship from Humboldt Foundation.}
\address{Technische Universit\"at Berlin, Institut f\"ur Mathematik,
Sekretariat MA 4-1,
Stra{\ss}e des 17. Juni 136,
D-10623 Berlin, Germany}
\email{malikios@math.tu-berlin.de}
\author{Sinai Robins}
\thanks{S. Robins was supported by the Singapore Ministry of Education ARF Tier 2 Grant MOE2011-T2-1-090, and by ICERM, The Institute for Computational and Experimental  Research in Mathematics}
\address{Department of Mathematics,
Brown University,
Box 1917, 151 Thayer Street,
Providence, RI 02912}
\email{sinai\textunderscore\hspace{.05cm}robins@brown.edu}
\author{Zhang Yichi}
\address{Division of Mathematical Sciences,  
Nanyang Technological University,
SPMS-MAS-03-01, 
\quad 21 Nanyang Link,
Singapore 637371} 
\email{YCZhang@ntu.edu.sg}
\date{}
\begin{document}

\begin{abstract}
We define certain natural finite sums of $n$'th roots of unity, called $G_P(n)$, that are associated to each convex integer polytope $P$, and which generalize the classical $1$-dimensional Gauss sum $G(n)$ defined over $\mathbb Z/ {n \mathbb Z}$, to higher dimensional abelian groups and integer polytopes.   We consider the finite Weyl group $\mc{W}$, generated by the reflections with respect to the coordinate hyperplanes, as well as all permutations of the coordinates;  further, we let $\mc G$  be the group generated by $\mc{W}$ as well as all integer translations in $\mathbb Z^d$.   We prove that if $P$ multi-tiles $\mathbb R^d$ under the action of $\mc G$, then we have the closed form $G_P(n) = \vol(P) G(n)^d$.   Conversely, we also prove that if $P$ is a lattice tetrahedron in $\mathbb R^3$, of volume $1/6$, such that $G_P(n) = \vol(P) G(n)^d$, for $n \in \{ 1,2,3,4 \}$, then there is an element $g$ in $\mc G$ such that $g(P)$ is the fundamental tetrahedron with vertices $(0,0,0)$, $(1, 0, 0)$, $(1,1,0)$, $(1,1,1)$.
\end{abstract}
\maketitle

\bigskip
\bigskip

\section{Introduction}

Our goal is to define certain finite sums of roots of unity, associated to a convex lattice polytope $P$,   in order to help us determine whether $P$ has certain symmetries and in fact whether $P$ is a fundamental domain of a certain Weyl group.   For $3$-dimensional integer tetrahedra $P$, we discover that certain natural generalizations of the classical $1$-dimensional Gauss sums, which we call polyhedral Gauss sums, collapse to a closed form over $P$ if and only if $P$ is a fundamental domain of a Weyl group.  

Intuitively, we are projecting the structure of $P$ onto the $2$-dimensional complex plane, and seeing what a closed form of its associated Gauss sum of roots of unity in the complex plane tells us about the question of whether or not P is a fundamental domain for some group acting on P.   It is much easier to handle $2$-dimemsional computations directly than $d$-dimensional geometric computations, and surprisingly we can discern the geometry of $P$ in a very detailed way by sufficiently many of these computations with roots of unity.   From a number-theoretic perspective, these computations generalize the classical $1$-dimensional results of Gauss to $d$-dimensional integer polytopes. 

 Gauss sums over finite abelian groups have been studied by \cite{Siegel} and \cite{Krazer, Turaev}, and they can be viewed as the study of Gauss sums over integer parallelepipeds, because when we quotient $\mathbb Z^d$ by the discrete subgroup generated by the edge vectors of an integer parallelepiped, we get a finite abelian group.  Here we extend the closed form results in the existing literature on Gauss sums over parallelepipeds, to more general Gauss sums over integer polytopes.  
 
 In one direction, if we assume that $P$ is any $d$-dimensional  integer polytope that tiles or multi-tiles Euclidean space by a Weyl group, then we can show that its corresponding polyhedral Gauss sum always achieves a nice closed form, proportional to the volume of $P$.   In the other direction, for $d=3$,  if we assume that the polyhedral Gauss sum of certain integer tetrahedra $P$ achieve a closed form proportional to their volume, then we show $P$ must be a fundamental domain for a certain Weyl group. 

In order to precisely define our generalized Gauss sums, we first need the notion of a solid angle at any point $x \in \mathbb R^d$, relative to a fixed polytope $P$.   We let $1_P$ be the indicator function of $P$, and we define the solid angle at any point $x \in \mathbb R^d$ by 

 \begin{equation}
 \omega_P(x) :=  \frac{   \vol(B(x,r) \cap P) }{\vol(B(x,r))},
 \end{equation}
 for all sufficiently small values of $r >0$.
 Some obvious but noteworthy properties of $\omega_P$ are the following:
$\omega_P(x)=1$ if $x\in\inter{P}$ and $\omega_P(x)=0$ if $x\notin P$.  For the non-trivial case that 
$x\in\partial P$ (the boundary of $P$),  $\omega_P(x)$ is equal to the solid angle of the smallest cone containing $P$ with
  apex at $x$.

 \begin{defn}
  The polyhedral Gauss sum over $P$ is defined by
  \[G_P(n)=\sum_{x\in\ZZ^d}\omega_{nP}(x)e\left(\frac{\norm{x}^2}{n}\right),\]
  for $n\in\NN$, where $nP$ denotes the dilation of $P$ by $n$, and as usual, $e(x):=e^{2\pi ix}$.
 \end{defn}

The classical $1$-dimensional Gauss sum, for example, is the case of the $1$-dimensional polytope $P=[0,1]$, and for this important case we 
define 
 \[
 G(n)=\sum_{k\in\ZZ/n\ZZ}e\left(\frac{k^2}{n}\right).
 \]
 
 Gauss discovered a closed form for this $1$-dimensional Gauss sum \cite{NZM}, given by:   
 
\begin{equation}
G(n):=\sum_{k=0}^{n-1} e\left(\frac{2\pi ik^2}{n}\right)=\left\{
\begin{array}{lr}
(1+i)\sqrt{n} & n \equiv 0\;\;mod\;\;4\\
\sqrt{n} & n\equiv 1\;\;mod\;\;4\\
0 & n\equiv 2\;\;mod\;\;4\\
i\sqrt{n} & n\equiv 3\;\;mod\;\;4
\end{array}
\right.
\end{equation}

 It is natural to wonder what geometric properties an integer polytope must possess in order to achieve similar closed forms in higher dimensions.   To this end we have the following result. 
 
 \begin{thm}\label{main1}
  If $P$ multi-tiles the space $\mathbb R^d$ with the group $\mc{G}$, then
  \[G_P(n)=\vol(P)G(n)^d.\]
 \end{thm}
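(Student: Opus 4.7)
The plan is to pair the multi-tiling hypothesis against the Gaussian kernel $e(\norm{x}^2/n)$ and sum over an appropriate fundamental domain for a rescaled group, exploiting the invariance of this kernel under $\mc{G}_n := \mc{W}\ltimes n\ZZ^d$ acting on $\ZZ^d$. I would first rewrite the hypothesis in its pointwise solid-angle form $\sum_{g\in\mc{G}}\omega_{gP}(x) = k$ for every $x \in \RR^d$, where $k$ denotes the multi-tiling level. Dilating by $n$ converts this into
$$\sum_{g\in\mc{G}_n}\omega_{g(nP)}(x) = k \qquad \text{for all } x\in\RR^d.$$

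Multiplying by $e(\norm{x}^2/n)$ and summing over $x\in\ZZ^d\cap[0,n)^d$ (a fundamental domain for $n\ZZ^d$), the left-hand side factors by separability of the quadratic form, giving $k\prod_{i=1}^d\sum_{x_i=0}^{n-1}e(x_i^2/n) = k\,G(n)^d$. For the right-hand side, I would write a general $g \in \mc{G}_n$ as $\sig \in \mc{W}$ followed by a translation $t \in n\ZZ^d$ and substitute $y = g^{-1}x$. The crucial observation is the invariance $e(\norm{gy}^2/n) = e(\norm{y}^2/n)$ for $y \in \ZZ^d$: indeed $\norm{gy}^2 = \norm{y}^2 + 2(\sig y)\cdot t + \norm{t}^2$, and both correction terms lie in $n\ZZ$ because $\sig y \in \ZZ^d$ and $t \in n\ZZ^d$. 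The right-hand side then collapses to $\sum_{y\in\ZZ^d}\omega_{nP}(y)\,e(\norm{y}^2/n)$ weighted by the cardinality $\#\set{g\in\mc{G}_n : gy\in[0,n)^d}$; for each $\sig\in\mc{W}$ there is a unique corresponding $t\in n\ZZ^d$, so this multiplicity equals $|\mc{W}|$, and the right-hand side is $|\mc{W}|\,G_P(n)$.

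To finish I would compute $k$ directly from the multi-tiling hypothesis by integrating over $[0,1)^d$: for each fixed $\sig \in \mc{W}$, the integer translates of $\sig P$ tile $\RR^d$ with average density one, so $\sum_{t \in \ZZ^d}\vol((\sig P + t) \cap [0,1)^d)=\vol(P)$, and summing over $\sig$ yields $k = |\mc{W}|\vol(P)$. Combining gives $|\mc{W}|\,G_P(n) = k\,G(n)^d = |\mc{W}|\vol(P)\,G(n)^d$, proving the theorem. I expect the main obstacle to be upgrading the standard almost-everywhere, indicator-function definition of multi-tiling to the pointwise solid-angle identity $\sum_g \omega_{gP}(x) = k$ for every $x$, which is needed to correctly weight lattice points on $\partial(nP)$ and on reflecting hyperplanes of $\mc{W}$; once this refinement is justified, the rest of the argument follows linearly from the semidirect-product structure of $\mc{G}_n$ and the separability of $\norm{\cdot}^2$.
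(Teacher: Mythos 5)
Your argument is correct, and it rests on the same two pillars as the paper's proof --- the pointwise constancy $\sum_{g\in\mc{G}}\omega_{gP}(x)=|\mc{W}|\vol(P)$ (Proposition \ref{const}) and the invariance of the kernel $e(\norm{\cdot}^2/n)$ under the relevant group --- but it organizes the unfolding differently. The paper folds the lattice sum all the way down to the fundamental simplex $T$ of the full group $\mc{G}$ (Lemma \ref{xtonx}), which forces it to track stabilizers via $g_P(x)=f_P(x)/|\mc{G}_x|$; you instead unfold only over the normal translation subgroup $n\ZZ^d$, take the cube $[0,n)^d$ as an exact fundamental domain, and observe that every $y\in\ZZ^d$ is carried into that cube by exactly $|\mc{W}|$ elements of $\mc{G}_n=\mc{W}\ltimes n\ZZ^d$. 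This uniform multiplicity is what your route buys: it eliminates all stabilizer bookkeeping at points on reflecting hyperplanes where $|\mc{W}_x|>1$, and it makes the factorization into $G(n)^d$ immediate by separability over the cube. The paper's route, in exchange, isolates Lemma \ref{xtonx} as a reusable orbit-sum formula, which is the form needed later for the converse. The one step you flag as the obstacle --- upgrading the almost-everywhere indicator identity to the everywhere solid-angle identity --- is exactly Proposition \ref{const}, and it is a two-line mollification: since $\omega_{gP}(x)=\lim_{r\to0}\vol(B(x,r)\cap gP)/\vol(B(x,r))$, averaging the a.e.\ identity $\sum_g 1_{gP}=k$ over $B(x,r)$ and letting $r\to0$ gives the pointwise statement, and integrating the same identity over $[0,1)^d$ gives $k=|\mc{W}|\vol(P)$ exactly as you compute. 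With that lemma in hand, your proof is complete and, if anything, slightly cleaner than the printed one.
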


In general, the converse question of whether such a closed form for a polyhedral Gauss sum over an integer polytope $P$ implies that $P$ must tile or multi-tile Eucliden space 
seems to be out of reach for general polytopes in dimension $d\geq3$.  However, we discovered a partial converse for $d=3$ and in the case that $P$ belongs to a class of integer simplices.

\begin{thm}\label{converse}
 Let $T$ be a lattice tetrahedron of volume $1/6$, such that $G_T(n)=\vol(T)G(n)^3$ for $n \in \{1,2,3,4\}$. Then there is an element $g$ in the Weyl group $\mc{W}$ such that $g(T)$ 
 is the tetrahedron with vertices $(0,0,0),(1,0,0),(1,1,0),(1,1,1)$.
\end{thm}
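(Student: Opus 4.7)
Since $\mc G$ contains the translation subgroup $\ZZ^3$, my first step is to translate $T$ so that one of its vertices sits at the origin, writing $T=\conv(0,a,b,c)$ with $a,b,c\in\ZZ^3$. The volume hypothesis $\vol(T)=1/6$ is then equivalent to $\det[a\,|\,b\,|\,c]=\pm 1$, so $\set{a,b,c}$ is a $\ZZ$-basis of $\ZZ^3$; in particular $T\cap\ZZ^3=\set{0,a,b,c}$, and for every $n\ge 1$ the lattice points of $nT$ are precisely the vectors $v_{ijk}=ia+jb+kc$ with $i,j,k\in\ZZ_{\ge 0}$ and $i+j+k\le n$. Moreover $\omega_{nT}(v_{ijk})$ depends only on the stratum that $(i,j,k)$ occupies in the standard simplex $\set{s+t+u\le n}$ (interior, facet-interior, edge-interior, or vertex), and is therefore one of finitely many geometric invariants of $T$ — four vertex solid angles, six edge dihedral angles, and the constant $1/2$ on facet-interiors — all independent of $n$.

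The second step is to rewrite each of the four hypotheses explicitly as
\[
G_T(n)=\sum_{\substack{i,j,k\ge 0\\ i+j+k\le n}}\omega_{nT}(v_{ijk})\, e\!\left(\frac{\norm{v_{ijk}}^2}{n}\right) = \tfrac{1}{6}G(n)^3,\qquad n\in\set{1,2,3,4}.
\]
The case $n=1$ collapses to the solid-angle identity $\omega_T(0)+\omega_T(a)+\omega_T(b)+\omega_T(c)=\tfrac{1}{6}$, a single rigidity constraint on the four vertex cones of $T$ (note for example that the standard lattice simplex $\conv(0,e_1,e_2,e_3)$ has vertex angle sum $\approx 0.206 \neq 1/6$, so this condition is already a nontrivial filter). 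For $n=2,3,4$ the exponent $\norm{v_{ijk}}^2$ is reduced modulo $n$ and hence depends only on the residues of $a,b,c$ modulo $n$; substituting and collecting terms yields three identities pairing, stratum by stratum, the angular invariants of $T$ with explicit sums of $n$-th roots of unity. Since Theorem~\ref{main1} guarantees the same identities for the fundamental tetrahedron $T_0$, subtracting the two expansions should let me peel off the angular unknowns one stratum at a time and reduce each hypothesis to a purely arithmetic identity on $(a,b,c)\bmod n$.

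The argument then finishes by a finite case analysis. The Weyl group $\mc W$ (of order $48$) acts on $\ZZ^3$ preserving the squared norm and hence each $G_T(n)$; so it suffices to classify $T$ up to the $\mc W$-action on the ordered triple $(a,b,c)$. The $n=2,3,4$ hypotheses fix the residues of $a,b,c$ modulo $2,3,4$ respectively, and combined with $\det[a\,|\,b\,|\,c]=\pm 1$ these congruence data should pin $(a,b,c)$ down to a finite list of $\mc W$-orbits; the plan is then to test each orbit against the $n=1$ angle-sum identity and verify that only the upper-triangular basis $(e_1,\,e_1+e_2,\,e_1+e_2+e_3)$ survives. The main obstacle is this arithmetic-to-geometric decoupling: the angular invariants of $T$ are in general transcendental, so genuinely separating angular from residue information requires a careful stratum-by-stratum matching with $T_0$, and in particular one must handle those strata where the relevant root-of-unity factor vanishes (erasing the angular information at that stratum). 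Extracting from the remaining system enough constraints to force $(a,b,c)$ into a bounded fundamental domain of $\mc W\backslash\GL_3(\ZZ)$ — and then isolating the correct orbit — is the delicate step, and where I expect the heart of the proof to lie.
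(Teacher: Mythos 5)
Your opening steps coincide with the paper's: translate a vertex to the origin so that $\{a,b,c\}$ is a $\ZZ$-basis of $\ZZ^3$, stratify the lattice points of $nT$ by vertices, edges, facets and interior, and observe that the weights $\omega_{nT}$ on each stratum are the solid angles $\om_i$, the dihedral angles $\om_{ij}$, $1/2$, and $1$; this is exactly Proposition \ref{dihedral}, and your $n=1$ identity is \eqref{Gauss1}. The fatal gap is in your finiteness mechanism. You assert that the $n=2,3,4$ hypotheses ``fix the residues of $a,b,c$ modulo $2,3,4$'' and that these congruences together with $\det[a\,|\,b\,|\,c]=\pm1$ pin $(a,b,c)$ down to a finite list of $\mc{W}$-orbits. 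That cannot be true: the unimodular integer matrices with a prescribed residue modulo $12$ form a coset of the infinite principal congruence subgroup of level $12$ in $\GL_3(\ZZ)$, and $\mc{W}$ has order only $48$, so infinitely many $\mc{W}$-orbits share any given congruence data. Congruence information alone can never bound the edge lengths $n_{ij}$, and the $n=1$ angle-sum test applied ``orbit by orbit'' is then a test over an infinite set. (Moreover the Gauss sum relations do not actually determine $a,b,c$ modulo $n$; they only constrain quantities such as $n_{ij}\bmod 4$, and even extracting that requires separating the transcendental angular data from the roots of unity.)

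What actually closes the argument in the paper is a mechanism entirely absent from your plan: the relations for $n=2,4$ combined with the Gram relations \eqref{Gram} force \emph{exact rational values} of certain angles (e.g.\ $\om_0=1/12$, $\om_{03}=1/6$, $\om_{01}+\om_{02}=1/2$); the van Oosterom--Strackee formula \eqref{OS} then expresses $\cot2\pi\om_i$ as an explicit algebraic integer in the multiquadratic field $\QQ(\sqrt{n_{ij}},\sqrt{n_{ik}},\sqrt{n_{il}})$, and comparing with the known value (e.g.\ $\cot(\pi/6)=\sqrt3$) via $\QQ$-linear independence of square roots yields exact Diophantine constraints on the integers $n_{ij}$ and $\scal{v_i,v_j}$ --- for instance $n_{03}=3$, and that $n_{01}=3k^2+2k+1$ and $n_{02}=3k^2-2k+1$ must both be perfect squares, which forces $k=0$. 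A separate geometric lemma (orthogonal projection onto $v_3^{\perp}$ and the hexagonal lattice, Proposition \ref{vectors}) normalizes the vertices to one-parameter families, a parity case analysis on the $n_{ij}$ splits the proof in two, and the $n=3$ relation is needed to eliminate one of the cases. It is these exact algebraic identities, not residues of $a,b,c$ modulo small integers, that produce finiteness; without some substitute for them your proposal cannot be completed.
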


\bigskip
\bigskip
 \section{Preliminaries}
 
 The Weyl group   is the finite group generated by reflections with respect to the coordinate hyperplanes, as well
 as permutations of coordinates. We denote it by $\mc{W}$, and its cardinality is $2^d d!$. In this note, we
 will deal with sets that multi-tile the space under the action of $\mc{G}$, the group of operators generated by
 $\mc{W}$ and all lattice translations. Clearly, $\mc{G}\cong \mc{W}\times\ZZ^d$.
 
 The orbit of any point $x$ under the action of $\mc{G}$ is denoted by $\mc{G}(x)$,  and the stabilizer of any $x$ is denoted by
 $\mc{G}_x$ (and similarly for $\mc{W}$). Obviously, $\mc{G}_x$ is finite for all $x$, as $x$ cannot remain invariant
 under any lattice translation, and almost all $x$ have full orbit, i.e. $\abs{\mc{G}_x}=1$ except for a set of
 Lebesgue measure zero. Furthermore, the action of $\mc{W}$ can be restricted to $[0,1)^d=\TT^d$, a fundamental
 domain  for the action of the group $\ZZ^d$, acting by translations on $\mathbb R^d$; usually we will treat elements of $\TT^d$ as elements of $\RR^d$. Then, it is not hard
 to verify that $\abs{\mc{G}_x}=\abs{\mc{W}_x}$.
 
 There are many choices of fundamental domains for $\mc{G}$, and a natural choice for such a fundamental domain is the tetrahedron 
 \[
 T=\set{(x_1,\dotsc,x_d)\in\RR^d\vert 0\leq x_1\leq\dotsb\leq x_d\leq1/2},
 \]
 which is also a fundamental domain of $\mc{W}$ acting on $\TT^d$.

 \begin{defn}\label{multitile}
 We say that $P$ \emph{multitiles} $\mathbb R^d$, with multiplicity  $m$, if 
  $\sum_{g\in\mc{G}}1_P(gx)=m$ for almost all $x \in \mathbb R^d$.
  \end{defn}

Equivalently, we may also say that $P$ multi-tiles with multiplicity $m$ if  $\abs{\mc{G}(x)\cap P}=m$, for almost all $x$.
It is clear from definition \ref{multitile}  that this $m$ must be a positive integer.
 Next, define the functions $f_P$ and $g_P$ on $T$ as follows:
 \[f_P(x)=\sum_{g\in\mc{G}}\omega_P(gx),\	g_P(x)=\sum_{y\in\mc{G}(x)}\omega_P(y).\]
 Obviously, $f_P=g_P$ almost everywhere; in particular
 \[g_P(x)=\frac{1}{\abs{\mc{W}_x}}f_P(x),\]
 so they differ only on the boundary of $T$.
 
 \bigskip
 \begin{prop}\label{const}
  If $P$ multi-tiles the space, then $f_P$ is constant, equal to $\abs{\mc{W}}\vol(P)$.
 \end{prop}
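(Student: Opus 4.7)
The plan is to reduce the pointwise identity $f_P(x) = |\mc{W}|\vol(P)$ to an averaged form of the multi-tiling condition over shrinking balls, by interchanging the sum over $\mc{G}$ with the limit that defines the solid angle.

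First I would use that every $g \in \mc{G}$ is an isometry of $\RR^d$, so $g$ maps the ball $B(x,r)$ bijectively onto $B(gx,r)$, and consequently
\[
\vol\bigl(B(gx,r)\cap P\bigr) = \vol\bigl(B(x,r)\cap g^{-1}P\bigr),
\]
which, upon dividing by $\vol(B(x,r))$ and letting $r\to 0^+$, gives $\omega_P(gx) = \omega_{g^{-1}P}(x)$. This recasts
\[
f_P(x) = \sum_{g \in \mc{G}} \omega_{g^{-1}P}(x).
\]

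Next, for each fixed $x$ and small $r>0$, only finitely many $g \in \mc{G}$ satisfy $B(x,r) \cap g^{-1}P \neq \varnothing$, since $P$ is bounded and the $\ZZ^d$-translation part of $\mc{G} \cong \mc{W} \times \ZZ^d$ is discrete. Thus the sum $\sum_g \vol\bigl(B(x,r) \cap g^{-1}P\bigr)$ is a \emph{finite} sum and exchanges freely with the integral:
\[
\sum_{g \in \mc{G}} \vol\bigl(B(x,r) \cap g^{-1}P\bigr)
= \int_{B(x,r)} \sum_{g \in \mc{G}} 1_P(gy)\,dy
= m\,\vol\bigl(B(x,r)\bigr),
\]
where the last equality uses that $\sum_g 1_P(gy) = m$ for almost every $y$ by Definition~\ref{multitile}, which suffices under the integral sign. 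Dividing by $\vol(B(x,r))$ and letting $r \to 0^+$, the local finiteness of the sum lets me interchange the limit past $\sum_g$, producing $f_P(x) = m$ for every $x$.

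Finally, to evaluate the constant, I would integrate $\sum_g 1_P(gy) = m$ against any fundamental domain $D$ of $\mc{G}$; since $\TT^d$ is a fundamental domain for the $\ZZ^d$-action and $\mc{W}$ has order $2^d d!$, the domain $D$ has volume $1/|\mc{W}|$, yielding $\vol(P) = m\,\vol(D) = m/|\mc{W}|$, i.e., $m = |\mc{W}|\vol(P)$. The only delicate step is the interchange of the limit with the sum over $\mc{G}$, which rests on the local finiteness of the orbit $\mc{G}(x)$ in a bounded region; this is what allows one to promote the almost-everywhere multi-tiling identity to the everywhere pointwise identity for $f_P$.
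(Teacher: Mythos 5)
Your proof is correct and follows essentially the same route as the paper's: both establish $f_P(x)=m$ everywhere by exchanging the sum over $\mc{G}$ with the limit and the integral over shrinking balls (justified by local finiteness of the orbit), and both then identify $m=\abs{\mc{W}}\vol(P)$ by integrating the multi-tiling identity over a fundamental domain of $\mc{G}$ of volume $1/\abs{\mc{W}}$. The only difference is cosmetic: you move $g$ onto $P$ via $\omega_P(gx)=\omega_{g^{-1}P}(x)$, whereas the paper changes variables inside the integral over $B(gx,r)$.
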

 
 \begin{proof}
  By definition, $\abs{\mc{G}(x)\cap P}=m$ for almost all $x$ and some positive integer $m$.   Then, for \emph{all} $x\in T$ we have
  \begin{eqnarray*}
   f_P(x)=\sum_{g\in\mc{G}}\omega_P(gx) &=& 
   \sum_{g\in\mc{G}}\lim_{r\rar0}\frac{1}{\vol(B(gx,r))}\int_{B(gx,r)}1_P(y)dy\\
   &=& \lim_{r\rar0}\frac{1}{\vol(B(x,r))}\sum_{g\in\mc{G}}\int_{B(x,r)}1_P(gy)dy\\
   &=& \lim_{r\rar0}\frac{1}{\vol(B(x,r))}\int_{B(x,r)}\sum_{g\in\mc{G}}1_P(gy)dy\\
   &=& m.
  \end{eqnarray*}
The above sum commutes with the limit and the integral, because it is finite. For the second part,
\begin{eqnarray*}
 \frac{m}{\abs{\mc{W}}}=\int_T f(x)dx &=& \int_T\sum_{g\in\mc{G}}\omega_P(gx)dx\\
 &=& \sum_{g\in\mc{G}}\int_T \omega_P(gx)dx\\
 &=& \sum_{g\in\mc{G}}\int_T 1_P(gx)dx\\
 &=& \sum_{g\in\mc{G}}\vol(g(T)\cap P)\\
 &=& \vol(P),
\end{eqnarray*}
where again, interchanging summation and integration is justified by the fact that the sum is finite.
 \end{proof}

 \bigskip
\bigskip
 \section{Gauss sums}\label{Gausssums}

 The Weyl group satisfies the following properties:
 \begin{itemize}
  \item it preserves both the Lebesgue and discrete volumes; in particular, it consists of invertible linear
  transformations that preserve the lattice $\ZZ^d$.
  \item it preserves norms, so it also preserves Gauss sums.
 \end{itemize}
 
 It easily follows that the full group $\mc{G}$ also preserves Lebesgue and discrete measures, as well as Gauss
 sums.

 \begin{lemma}\label{xtonx}
  With notation as above, we have
  \[G_P(n)=\sum_{x\in T\cap\frac{1}{n}\ZZ^d}g_P(x)e(n\norm{x}^2).\]
 \end{lemma}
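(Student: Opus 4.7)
The plan is to reduce the lattice sum defining $G_P(n)$ to a sum over $\mc{G}$-orbit representatives inside $T$, via rescaling followed by an orbit decomposition of $\tfrac{1}{n}\ZZ^d$. First I would substitute $y = x/n$. A change of variables in the limit defining the solid angle gives $\omega_{nP}(ny) = \omega_P(y)$, and the phase rescales as $e(\norm{ny}^2/n) = e(n\norm{y}^2)$. Hence
\[
G_P(n) = \sum_{y \in \tfrac{1}{n}\ZZ^d} \omega_P(y)\, e\bra{n\norm{y}^2}.
\]

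Next I would partition $\tfrac{1}{n}\ZZ^d$ into $\mc{G}$-orbits. The reflections and coordinate permutations in $\mc{W}$ preserve $\ZZ^d$ (hence $\tfrac{1}{n}\ZZ^d$), and integer translations preserve $\tfrac{1}{n}\ZZ^d$ trivially, so $\mc{G}$ acts on this lattice and $T \cap \tfrac{1}{n}\ZZ^d$ is a full set of orbit representatives. To group the sum by orbits I need $e(n\norm{y}^2)$ to be constant on each orbit. The Weyl part is immediate since $\norm{\cdot}^2$ is $\mc{W}$-invariant, so only integer translations require checking: for $k \in \ZZ^d$ and $y \in \tfrac{1}{n}\ZZ^d$,
\[
n\norm{y+k}^2 - n\norm{y}^2 = 2\scal{ny,k} + n\norm{k}^2 \in \ZZ,
\]
using crucially that $ny \in \ZZ^d$. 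Factoring this orbit-constant phase outside the inner sum and recognizing the inner sum as $g_P(x) = \sum_{y \in \mc{G}(x)} \omega_P(y)$ yields the claimed identity.

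The only delicate step is verifying that $T \cap \tfrac{1}{n}\ZZ^d$ parametrizes the $\mc{G}$-orbits of $\tfrac{1}{n}\ZZ^d$ without duplication. The closed chamber $T = \set{0 \le x_1 \le \cdots \le x_d \le 1/2}$ has boundary points with nontrivial stabilizer, but each $\mc{G}$-orbit still meets $T$ in exactly one point: applying a permutation followed by a suitable product of sign changes brings any orbit into the positive chamber $[0,1/2]^d$ with sorted coordinates, and this representative is unique. This verification is the only nonroutine point; once it is in place, the rescaling and orbit-grouping above give the lemma directly.
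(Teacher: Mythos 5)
Your proof is correct and follows essentially the same route as the paper's: rescale via $x \mapsto nx$, decompose $\tfrac{1}{n}\ZZ^d$ into $\mc{G}$-orbits represented by $T\cap\tfrac{1}{n}\ZZ^d$, and check that $e(n\norm{\cdot}^2)$ is orbit-constant by reducing to the integer-translation part, exactly as in the paper's computation with $gx = wx+\la$. The one addition is your explicit verification that each $\mc{G}$-orbit meets the closed chamber $T$ in exactly one point, which the paper takes for granted from the preliminaries where $T$ is introduced as a fundamental domain; that check is sound and harmless to include.
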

 
 \begin{proof}
  Replacing $x$ by $nx$ in the definition of a Gauss sum, we get
  \begin{eqnarray*}
   G_P(n) &=& \sum_{x\in\frac{1}{n}\ZZ^d}\omega_P(x)e(n\norm{x}^2)\\
   &=& \sum_{x\in T\cap\frac{1}{n}\ZZ^d}\sum_{y\in \mc{G}(x)}\omega_P(y)e(n\norm{x}^2)\\
   &=& \sum_{x\in T\cap\frac{1}{n}\ZZ^d}g_P(x)e(n\norm{x}^2),
  \end{eqnarray*}
since $n\norm{gx}^2\equiv n\norm{x}^2\bmod 1$; indeed, if $gx=wx+\la$, where $w\in\mc{W}$, $\la\in\ZZ^d$, then
\begin{eqnarray*}
 n\norm{wx+\la}^2=n\norm{wx}^2+n\norm{\la}^2+2n\scal{wx,\la}\equiv n\norm{x}^2+2\scal{w(nx),\la}
 \equiv n\norm{x}^2 \bmod1,
\end{eqnarray*}
for all $x\in\frac{1}{n}\ZZ^d$.
 \end{proof}

 \begin{proof}(of Theorem \ref{main1})
  By Proposition \ref{const}, the function $f_P$ is constant and equal to $\abs{\mc{W}}\vol(P)$. So,
  \begin{eqnarray*}
   \vol(P)G(n)^d &=& \vol(P)\sum_{x\in\ZZ^d/\frac{1}{n}\ZZ^d}e(n\norm{x}^2)\\
   &=& \vol(P)\sum_{x\in T\cap\frac{1}{n}\ZZ^d}\frac{1}{\abs{\mc{W}_x}}\sum_{g\in\mc{W}}e(n\norm{x}^2)\\
   &=& \sum_{x\in T\cap\frac{1}{n}\ZZ^d}\frac{\abs{\mc{W}}\vol(P)}{\abs{\mc{G}_x}}e(n\norm{x}^2)\\
   &=& \sum_{x\in T\cap\frac{1}{n}\ZZ^d}g_P(x)e(n\norm{x}^2)\\
   &=& G_P(n),
  \end{eqnarray*}
by Lemma \ref{xtonx} and the fact that
\[g_P(x)=\frac{f_P(x)}{\abs{\mc{G}_x}}=\frac{\abs{\mc{W}}\vol(P)}{\abs{\mc{G}_x}}.\qedhere\]
 \end{proof}
 
 \begin{qu}
  Is the converse true? That is, if $G_P(n)=\vol(P)G(n)^d$ for all $n$, then is it true that $P$ multi-tiles
  the space by $\mc{G}$? 
 \end{qu}
 
 The converse is indeed true for dimensions $d=1, 2$. We have nothing to prove when $d=1$, as any convex lattice polytope in $\RR$ has the form $[a,b]$, where $a,b\in\ZZ$,
and hence multi-tiles $\RR$ $b-a$ times.

The case $d=2$ is quite easy, too. As $P$ can be triangulated, it suffices to prove the converse for lattice triangles. But any lattice
triangle multi-tiles the plane under $\mc{G}$; indeed, suppose that $T=\conv\set{0,v_1,v_2}$, where $v_1,v_2\in\ZZ^2$ are 
linearly independent. The union $T\cup(-T+v_1+v_2)$ is a parallelogram, in particular the closure of a fundamental domain of the 
sublattice of $\ZZ^2$ generated by $v_1$ and $v_2$, which shows that $T$ multi-tiles the plane, therefore any lattice polygon satisfies
the Gauss sum formula and there is nothing else to prove.

 \bigskip
\bigskip
\section{Solid and dihedral angles of a tetrahedron}

Before proceeding to the first $3$-dimensional case, it would be useful to revise a couple of things related to the geometry of the tetrahedron, as well
as the basic tools. Consider the tetrahedron $T$ in $\RR^3$ with vertices $v_0$, $v_1$, $v_2$, and $v_3$. The solid angle at vertex $v_i$ is denoted by $\om_i$
and the dihedral angle at the edge connecting $v_i$ and $v_j$ is denoted by $\om_{ij}$. Here, and throughout the paper, we normalize everything by considering the angles corresponding
 to both $S^1$ and $S^2$ to be equal to $1$ (not $2\pi$ and $4\pi$, respectively).
Under this normalization, we have the \emph{Gram relations} \cite{McMullen71,McMullen77}, which are equalities connecting the solid with the dihedral angles of a tetrahedron: 
  \begin{equation}\label{Gram}
 \om_i=\frac{1}{2}\sum_{j\neq i}\om_{ij}-\frac{1}{4},
 \end{equation}
 which yield
 \[1+\sum_{i=0}^3\om_i=\sum_{0\leq i<j\leq3}\om_{ij}.\]
 We also denote by $n_{ij}=\norm{v_i-v_j}^2$ the squared lengths of the edges. Now let $\set{0,1,2,3}=\set{i,j,k,l}$.
 Oosterom and Strackee \cite{OS83} had proved the following formula for the solid angle of a simple cone:
 \begin{equation}\label{OSgen}
  \cot2\pi\om_i=\frac{\sqrt{n_{ij}n_{ik}n_{il}}+\scal{v_k-v_i,v_l-v_i}\sqrt{n_{ij}}+\scal{v_l-v_i,v_j-v_i}\sqrt{n_{ik}}
  +\scal{v_j-v_i,v_k-v_i}\sqrt{n_{il}}}{\abs{\det(v_j-v_i,v_k-v_i,v_l-v_i)}}.
 \end{equation}
Next, we will focus on the \emph{external solid angles} of a tetrahedron. Unlike the $2$-dimensional case,
there isn't a unique external angle, but three; every external solid angle is detrmined by a vertex and an adjacent edge. The figure below shows us the external
solid angle at $v_0$ with respect to the edge $v_1-v_0$ (for convenience we put $v_0=(0,0,0)$):
\begin{center}
\begin{tikzpicture}
\draw (0,0) -- (3,6) node[left] {$v_1$};
\draw (0,0) -- (6,0) node[right] {$v_2$};
\draw (3,6) -- (6,0);
\draw (3,6) -- (6,3) node[above] {$v_3$};
\draw (6,0) -- (6,3);
\draw[dashed] (0,0) node[left] {$v_0=(0,0,0)$} -- (6,3);
\draw[->] (0,0) -- (-3,-6) node[right] {$-v_1$};
\end{tikzpicture}
\end{center}
We denote the external solid angle at $v_i$ along $v_j-v_i$ by $\vphi_{ij}$. A basic relation is
\begin{equation}\label{external}
 \om_{ij}=\om_i+\vphi_{ij}.
\end{equation}
The solid angle $\vphi_{ij}$ is defined by the vectors $v_i-v_j,v_k-v_i,v_l-v_i$, and hence
 \begin{equation}\label{OSextgen}
  \cot2\pi\vphi_{ij}=\frac{\sqrt{n_{ij}n_{ik}n_{il}}+\scal{v_k-v_i,v_l-v_i}\sqrt{n_{ij}}-\scal{v_l-v_i,v_j-v_i}\sqrt{n_{ik}}
  -\scal{v_j-v_i,v_k-v_i}\sqrt{n_{il}}}{\abs{\det(v_j-v_i,v_k-v_i,v_l-v_i)}}.
 \end{equation}
 Next, we will make the following assumptions:
 \begin{itemize}
         \item[(a)] $v_0=(0,0,0)$.
         \item[(b)] $v_i\in\ZZ^3$, for all $i$.
         \item[(c)] $T$ has minimal volume, i.~e. $\vol(T)=1/6$, or equivalently, $v_1$, $v_2$, $v_3$ is a basis of $\ZZ^3$.
        \end{itemize}
 Then \eqref{OSgen} and \eqref{OSextgen} become
  \begin{equation}\label{OS}
  \cot2\pi\om_i=\sqrt{n_{ij}n_{ik}n_{il}}+\scal{v_k-v_i,v_l-v_i}\sqrt{n_{ij}}+\scal{v_l-v_i,v_j-v_i}\sqrt{n_{ik}}
  +\scal{v_j-v_i,v_k-v_i}\sqrt{n_{il}},
 \end{equation}
 and
  \begin{equation}\label{OSext}
  \cot2\pi\vphi_{ij}=\sqrt{n_{ij}n_{ik}n_{il}}+\scal{v_k-v_i,v_l-v_i}\sqrt{n_{ij}}-\scal{v_l-v_i,v_j-v_i}\sqrt{n_{ik}}
  -\scal{v_j-v_i,v_k-v_i}\sqrt{n_{il}},
 \end{equation}
respectively. Apparently, $\cot2\pi\om_i$ and $\cot2\pi\vphi_{ij}$ are both algebraic integers, belonging both to the multiquadratic field
$\QQ(\sqrt{n_{ij}},\sqrt{n_{ik}},\sqrt{n_{il}})$, which we denote by $K_i$. Between these two numbers there is a simple algebraic relation.
\begin{prop}\label{tau}
 Suppose that $\sqrt{n_{ij}}\notin\QQ(\sqrt{n_{ik}},\sqrt{n_{il}})$ and 
 $\tau$ is the unique nontrivial $\QQ(\sqrt{n_{ik}},\sqrt{n_{il}})$-automorphism of $K_i$ (i.~e it fixes $\QQ(\sqrt{n_{ik}},\sqrt{n_{il}})$, 
 but $\tau(\sqrt{n_{ij}})=-\sqrt{n_{ij}}$), then $\cot2\pi\vphi_{ij}=-\tau(\cot2\pi\om_i)$ and $\sqrt{n_{ij}}\cot2\pi\om_{ij}\in\QQ(\sqrt{n_{ik}},\sqrt{n_{il}})$.
\end{prop}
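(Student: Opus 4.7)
My plan is to derive both assertions by a direct algebraic manipulation of the closed-form expressions \eqref{OS} and \eqref{OSext}, combined with the cotangent addition formula applied to the relation $\om_{ij} = \om_i + \vphi_{ij}$ from \eqref{external}.

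For the first assertion, I would compare \eqref{OS} and \eqref{OSext} summand by summand. Both formulas consist of the four terms $\sqrt{n_{ij}n_{ik}n_{il}}$, $\scal{v_k - v_i, v_l - v_i}\sqrt{n_{ij}}$, $\scal{v_l - v_i, v_j - v_i}\sqrt{n_{ik}}$, and $\scal{v_j - v_i, v_k - v_i}\sqrt{n_{il}}$, and they differ only in the signs of the last two. Since $\tau$ fixes $\QQ(\sqrt{n_{ik}}, \sqrt{n_{il}})$ but sends $\sqrt{n_{ij}} \mapsto -\sqrt{n_{ij}}$, applying $\tau$ to $\cot 2\pi\om_i$ reverses the signs of exactly the first two summands (those containing $\sqrt{n_{ij}}$); negating then flips all four signs, which matches $\cot 2\pi\vphi_{ij}$ term-for-term.

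For the second assertion, I set $a := \cot 2\pi\om_i$, so that by the first part $\cot 2\pi\vphi_{ij} = -\tau(a)$. The edge relation \eqref{external} together with the cotangent addition formula gives
\[
\cot 2\pi\om_{ij} \;=\; \frac{\cot 2\pi\om_i\,\cot 2\pi\vphi_{ij} - 1}{\cot 2\pi\om_i + \cot 2\pi\vphi_{ij}} \;=\; \frac{-a\,\tau(a) - 1}{a - \tau(a)}.
\]
The hypothesis $\sqrt{n_{ij}} \notin \QQ(\sqrt{n_{ik}}, \sqrt{n_{il}})$ lets me decompose uniquely $a = A + B\sqrt{n_{ij}}$ with $A, B \in \QQ(\sqrt{n_{ik}}, \sqrt{n_{il}})$. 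Then the numerator $-a\tau(a) - 1 = B^2 n_{ij} - A^2 - 1$ is $\tau$-invariant and thus lies in $\QQ(\sqrt{n_{ik}}, \sqrt{n_{il}})$, while the denominator is $a - \tau(a) = 2B\sqrt{n_{ij}}$. Multiplying by $\sqrt{n_{ij}}$ cancels the surd, yielding $\sqrt{n_{ij}}\cot 2\pi\om_{ij} = (B^2 n_{ij} - A^2 - 1)/(2B) \in \QQ(\sqrt{n_{ik}}, \sqrt{n_{il}})$.

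The one point that requires more than bookkeeping is verifying that the denominator does not vanish, i.e.\ $B \neq 0$. Reading off the coefficient of $\sqrt{n_{ij}}$ from \eqref{OS}, one finds $B = \norm{v_k - v_i}\norm{v_l - v_i} + \scal{v_k - v_i, v_l - v_i}$, which by the equality case of Cauchy--Schwarz vanishes only if $v_k - v_i$ and $v_l - v_i$ are antiparallel; this is excluded because $v_j - v_i$, $v_k - v_i$, $v_l - v_i$ form a basis of $\ZZ^3$ by assumption (c). I expect this small nondegeneracy check to be the subtlest point; everything else is a routine manipulation with the explicit formulas and Galois conjugation.
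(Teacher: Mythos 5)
Your proof is correct and follows essentially the same route as the paper: a term-by-term comparison of \eqref{OS} and \eqref{OSext} for the first claim, and the cotangent addition formula applied to \eqref{external}, with the numerator recognized as $-N(\cot 2\pi\om_i)-1$ and the denominator as $2\sqrt{n_{ij}}\bigl(\sqrt{n_{ik}n_{il}}+\scal{v_k-v_i,v_l-v_i}\bigr)$, for the second. Your only addition is the explicit Cauchy--Schwarz check that this denominator coefficient is nonzero (using the linear independence of the edge vectors at $v_i$), a nondegeneracy point the paper leaves implicit.
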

\begin{proof}
 The first conclusion is an immediate consequence of \eqref{OS} and \eqref{OSext}. The second follows from \eqref{external} and the formula for the cotangent
 of a sum:
 \[\cot2\pi\om_{ij}=\frac{\cot2\pi\om_i\cot2\pi\vphi_{ij}-1}{\cot2\pi\om_i+\cot2\pi\vphi_{ij}}=
 \frac{-N(\cot2\pi\om_i)-1}{2\sqrt{n_{ij}}(\sqrt{n_{ik}n_{il}}+\scal{v_k-v_i,v_l-v_i})},\]
 hence
 \[\sqrt{n_{ij}}\cot2\pi\om_{ij}=
 \frac{-N(\cot2\pi\om_i)-1}{2(\sqrt{n_{ik}n_{il}}+\scal{v_k-v_i,v_l-v_i})}\in\QQ(\sqrt{n_{ik}},\sqrt{n_{il}}),\]
 where $N$ is the number theoretic norm of the quadratic extension $K_i/\QQ(\sqrt{n_{ik}},\sqrt{n_{il}})$.
\end{proof}

\bigskip
\bigskip
\section{A converse for $3$-dimensional tetrahedra of volume $1/6$}

Assume that
\[G_T(n)=\vol(T)G(n)^d\]
holds for all $n$, for a convex lattice polytope, $T$. 
Any convex polytope is a union of simplices, so it is natural to check whether the converse holds for simplices first. This is the
first nontrivial case as there are lattice tetrahedra that do not satisfy the Gauss sum formula, such as $\conv\set{0,\bs e_1,
\bs e_2,\bs e_3}$, where $\bs e_i$ are the vectors of the standard basis of $\RR^3$.

So, we assume that $T=\conv\set{v_0=0,v_1,v_2,v_3}$ with the additional condition that $T$ has minimal volume.
 This means that $\vol(T)=1/6$ and $v_1$, $v_2$, $v_3$ is a basis of $\ZZ^3$. Let $\om_i$ be the solid angle of $T$ at the vertex
 $v_i$ and $\om_{ij}$ be the dihedral angle at the edge $v_j-v_i$.

 Now let's consider the Gauss sum relations, which for $T$ take the form
 \begin{equation}\label{Gauss}
 \sum_{x\in\ZZ^3}\omega_{nT}(x)e\left(\frac{\norm{x}^2}{n}\right)=\frac{G(n)^3}{6}.
 \end{equation}
 The only lattice points in $T$ are $v_i$ for $0\leq i\leq3$ and their contribution to the Gauss sum is precisely
 $\om_i$ for each $i$, so for $n=1$, \eqref{Gauss} becomes
 \begin{equation}\label{Gauss1}
 \sum_{i=0}^3\om_i=\frac{1}{6}.
 \end{equation}
 In general, the lattice points of $nT$ that lie on the vertices or the edges have the form $av_i+bv_j$ for all $i\neq j$
  where $a+b=n$ with $a,b\geq0$ integers. So, the contribution of these points to $G_T(n)$ is
  \begin{eqnarray*}
  &&\sum_{0\leq i\leq 3}\omega_{nT}(nv_i)e\left(\frac{\norm{nv_i}^2}{n}\right)+
  \sum_{0\leq i<j\leq3}\sum_{\substack{a+b=n\\a,b>0}}\omega_{nT}(av_i+bv_j)e\left(\frac{\norm{av_i+bv_j}^2}{n}\right)\\
  &&=\sum_{0\leq i\leq3}\om_i+
  \sum_{0\leq i<j\leq3}\sum_{a=1}^{n-1}\om_{ij}e\left(\frac{\norm{nv_j+a(v_i-v_j)}^2}{n}\right)\\
  &&=\sum_{0\leq i\leq3}\om_i+
  \sum_{0\leq i<j\leq3}\sum_{a=1}^{n-1}\om_{ij}e\left(\frac{n^2\norm{v_j}^2+2n\scal{v_j,a(v_i-v_j)}+a^2\norm{v_i-v_j}^2}{n}\right)\\
  &&=\sum_{0\leq i\leq3}\om_i+
  \sum_{0\leq i<j\leq3}\sum_{a=1}^{n-1}\om_{ij}e\left(\frac{a^2\norm{v_i-v_j}^2}{n}\right)\\
  &&=\sum_{0\leq i\leq3}\om_i+
  \sum_{0\leq i<j\leq3}\om_{ij}[G(n_{ij},n)-1]\\ 
  &&=-1+\sum_{0\leq i<j\leq3}\om_{ij}G(n_{ij},n),
  \end{eqnarray*} 
  using \eqref{Gram},
 where we put $n_{ij}=\norm{v_j-v_i}^2$, the squared lengths of the edges, and $G(a,b)$ is the quadratic Gauss sum given by
 \[G(a,b)=\sum_{n=0}^{b-1}e\left(\frac{an^2}{b}\right).\]
 The following formula by Gauss \cite{NZM} for $\gcd(a,b)=1$ will be very useful:
 \begin{equation}\label{quadGauss}
 G(a,b)=\begin{cases}
 0, \	&b\equiv2\bmod4\\
 \eps_b\sqrt{b}\left(\frac{a}{b}\right), \	&b \text{ odd}\\
 (1+i)\eps_a^{-1}\sqrt{b}\left(\frac{b}{a}\right), \	&4|b
 \end{cases}
 \end{equation}
 where
 \[\eps_m=\begin{cases}
 1, \	&m\equiv1\bmod4\\
 i, \	&m\equiv3\bmod4
 \end{cases}\]
 and $\left(\frac{a}{b}\right)$ is the Jacobi symbol. For $\gcd(a,b)=d>1$ we simply have $G(a,b)=dG(a/d,b/d)$.
 If $x$ is any other lattice point in $nT$, then we have 
 $\omega_{nT}(x)=1/2$ when $x$ is in the relative interior of one facet, and $\omega_{nT}(x)=1$ when $x\in\inter(T)$. This yields:
 
 \begin{prop}\label{dihedral}
 Let $T$ be a lattice tetrahedron with vertices $v_i$, $0\leq i\leq3$. Let $\om_{ij}$ be the dihedral angle at the edge
 $v_i-v_j$ and let $n_{ij}=\norm{v_i-v_j}^2$. Then
 \[G_T(n)=-1+\sum_{0\leq i<j\leq3}\om_{ij}G(n_{ij},n)+\ka(n),\]
 where $\ka(n)\in\QQ(e(1/n))$. 
 \end{prop}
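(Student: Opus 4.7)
The plan is to continue and package the stratified computation displayed in the paragraphs immediately before the statement. Every lattice point $x\in nT\cap\ZZ^3$ lies in the relative interior of exactly one face of $nT$ --- a vertex, an edge, a facet, or $\inter(nT)$ --- and $\om_{nT}(x)$ takes the value $\om_i$, $\om_{ij}$, $\tfrac12$, or $1$ correspondingly. I would therefore decompose $G_T(n)$ according to these four strata and handle each separately.

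The vertex and edge strata are handled exactly by the multi-line display just before the statement: each vertex $nv_i$ contributes $\om_i$ since $n\norm{v_i}^2\in\ZZ$, and each edge $[nv_i,nv_j]$ contributes $\om_{ij}(G(n_{ij},n)-1)$ after noting that $n\norm{v_j}^2$ and $2\scal{v_j,a(v_i-v_j)}$ are integers, leaving only the clean sum $\om_{ij}\sum_{a=1}^{n-1}e(a^2n_{ij}/n)$. Adding the two strata and invoking the Gram identity $1+\sum_i\om_i=\sum_{i<j}\om_{ij}$, which is obtained by summing \eqref{Gram} over $i$, collapses them to
\[
\sum_i\om_i+\sum_{i<j}\om_{ij}(G(n_{ij},n)-1)=-1+\sum_{i<j}\om_{ij}G(n_{ij},n).
\]

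It then remains to define $\ka(n)$ as the contribution from the lattice points in the relative interiors of the four facets (each weighted by $\tfrac12$) plus those in $\inter(nT)$ (each weighted by $1$). Since $\norm{x}^2\in\ZZ$ for every $x\in\ZZ^3$, each summand has the form $c\cdot e(m/n)$ with $c\in\QQ$ and $m\in\ZZ$, so $\ka(n)\in\QQ(e(1/n))$ as asserted. The argument is essentially bookkeeping and I do not foresee a real obstacle; the one slightly delicate point --- the vanishing mod $1$ of the cross term $2n\scal{v_j,a(v_i-v_j)}/n$ in the edge stratum --- is already carried out in the display above the statement, and the coefficient tracking for the facet and interior strata is elementary.
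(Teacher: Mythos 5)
Your proposal is correct and follows essentially the same route as the paper: the vertex and edge strata are exactly the displayed computation preceding the statement, the Gram identity collapses them to $-1+\sum_{i<j}\om_{ij}G(n_{ij},n)$, and $\ka(n)$ is defined as the facet-interior and interior contributions, which visibly lie in $\QQ(e(1/n))$ since $\norm{x}^2\in\ZZ$. No gaps.
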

 
 \begin{rem}
 The above holds for all lattice tetrahedra, not just the ones with minimal volume. However, if $\vol(T)=1/6$, then the only lattice
 points of $2T$ are the vertices and the midpoints of the edges, therefore $\ka(1)=\ka(2)=0$. The explicit formula for
 $\ka(n)$ is
 \[\ka(n)=\frac{1}{2}\sum_{0\leq i<j<k\leq3}\sum_{\substack{a+b+c=n\\a,b,c>0}}e\left(\frac{\norm{av_i+bv_j+cv_k}^2}{n}\right)
 +\sum_{\substack{a+b+c+d=n\\a,b,c,d>0}}e\left(\frac{\norm{av_0+bv_1+cv_2+dv_3}^2}{n}\right).\]
 In particular,
 \[\ka(3)=\frac{1}{2}\sum_{0\leq i<j<k\leq3}e\left(\frac{\norm{v_i+v_j+v_k}^2}{3}\right)\]
 and
 \begin{eqnarray*}
 \ka(4) &=& \frac{1}{2}\sum_{\substack{0\leq i<j\leq 3\\k\neq i,j}}e\left(\frac{\norm{v_i+v_j+2v_k}^2}{4}\right)
 +e\left(\frac{\norm{v_0+v_1+v_2+v_3}^2}{4}\right)\\
 &=& \sum_{0\leq i<j\leq3}e\left(\frac{\norm{v_i+v_j}^2}{4}\right)
 +e\left(\frac{\norm{v_0+v_1+v_2+v_3}^2}{4}\right)
 \end{eqnarray*}
 \end{rem}
 
 Next, we will investigate the parity of $n_{ij}$. Since $\vol(T)=1/6$, any three vectors corresponding to edges at a common vertex
 of $T$ form a basis of $\ZZ^3$. Furthermore, if $x=(x_1,x_2,x_3)\in\ZZ^3$ then $\norm{x}^2\equiv x_1+x_2+x_3\bmod2$, so if
 $\norm{u}^2$ and $\norm{v}^2$ have the same parity, then $\norm{u-v}^2$ is even. This means that at any face of $T$, either all
 or exactly one edge has even squared length. Moreover, not all three squared lengths of edges with a common vertex can be even,
 otherwise these vectors would span a proper even sublattice of $\ZZ^3$. Thus, we have one of the following two situations for the
 edges with even squared lengths of $T$: either they form a triangle, or they are opposite, having no vertex in common. By an appropriate
 lattice translation of $T$, we may assume that $v_0=0$, $n_{01}=\norm{v_1}^2$ and $n_{03}=\norm{v_3}^2$ are odd.
 
 \bigskip
 \noindent
 $\boxed{n_{02}=\norm{v_2}^2\text{ is odd}}$ Then $n_{ij}$ for $1\leq i<j\leq3$ are even. Then by Proposition \ref{dihedral} 
 and \eqref{quadGauss} we get
 \[G_T(2)=-1+2(\om_{12}+\om_{13}+\om_{23}).\]
 From \eqref{Gram} and \eqref{Gauss1} we get 
 \begin{equation}\label{sumdihedral}
 \sum_{0\leq i<j\leq3}\om_{ij}=\frac{7}{6},
 \end{equation}
 and since $G_T(2)=0$, as $T$ satisfies the Gauss sum formula for all $n$, we get
 \[\om_{12}+\om_{13}+\om_{23}=\frac{1}{2},\]
 and
 \begin{equation}\label{oddsum}
 \om_{01}+\om_{02}+\om_{03}=\frac{2}{3},
 \end{equation}
 hence
 \begin{equation}\label{solid0}
 \om_0=\frac{1}{12},
 \end{equation}
 by virtue of \eqref{Gram}.
 
 Next, we wish to examine the possible values of $n_{ij}\bmod4$. For the even $n_{ij}$, it is not hard to see that
 $n_{ij}\equiv2\bmod4$, because the edges $v_i-v_j$ correspond to primitive vectors in $\ZZ^3$; if $4|x_1^2+x_2^2+x_3^2$ then
 all $x_i$ must be even. The residue $n_{0i}\bmod4$ depends on the parity of the coordinates of $v_i$. First we notice that no
 two of the $n_{0i}$ can be $3\bmod4$; if, for example, $n_{01}\equiv n_{02}\equiv3\mod4$, then all coordinates of $v_1$ and
 $v_2$ must be odd, which yields $\frac{1}{2}(v_1+v_2)\in\ZZ^3$, a contradiction, because $v_1$, $v_2$, $v_3$ is a basis
 of $\ZZ^3$. We have thus proven:
 
 \begin{prop}
 Let $v_1$, $v_2$, $v_3$ be a basis of $\ZZ^3$ such that all $\norm{v_i}^2$ are odd. Then at most one of the $\norm{v_i}^2$ is
 $3\bmod4$.
 \end{prop}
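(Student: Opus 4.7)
The plan is to proceed by contradiction and extract the obstruction from the $4$-adic behaviour of the squared norm, exactly as the narrative preceding the statement hints. Suppose that at least two of $\norm{v_1}^2,\norm{v_2}^2,\norm{v_3}^2$ are $\equiv 3\bmod 4$; without loss of generality, we may assume $\norm{v_1}^2\equiv\norm{v_2}^2\equiv 3\bmod 4$.

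The key arithmetic fact is that for any integer $a$, $a^2\equiv 0$ or $1\bmod 4$ according as $a$ is even or odd. Consequently, for $v=(a,b,c)\in\ZZ^3$, the residue $\norm{v}^2\bmod 4$ equals the number of odd coordinates of $v$, taken modulo $4$. In particular, $\norm{v}^2\equiv 3\bmod 4$ forces all three coordinates of $v$ to be odd. Applying this to both $v_1$ and $v_2$ shows that every coordinate of $v_1$ and of $v_2$ is odd, so every coordinate of $v_1+v_2$ is even, i.e.\ $\tfrac12(v_1+v_2)\in\ZZ^3$.

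The contradiction is now immediate from the hypothesis that $v_1,v_2,v_3$ is a $\ZZ$-basis of $\ZZ^3$: the unique representation of $\tfrac12(v_1+v_2)$ in this basis has coefficients $(\tfrac12,\tfrac12,0)\notin\ZZ^3$, while on the other hand $\tfrac12(v_1+v_2)\in\ZZ^3$ would have to be an integer combination of $v_1,v_2,v_3$. This contradiction forces the conclusion that at most one of $\norm{v_1}^2,\norm{v_2}^2,\norm{v_3}^2$ can be $\equiv 3\bmod 4$.

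There is essentially no obstacle here; the only subtlety worth spelling out carefully is the equivalence between $\norm{v}^2\bmod 4$ and the parity pattern of the coordinates, since this is what converts a congruence condition into the concrete divisibility $v_1+v_2\in (2\ZZ)^3$ needed to violate the basis hypothesis.
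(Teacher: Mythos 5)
Your proof is correct and follows exactly the paper's argument: both deduce from $\norm{v}^2\equiv 3\bmod 4$ that all coordinates are odd, conclude $\tfrac12(v_1+v_2)\in\ZZ^3$, and derive a contradiction with the basis hypothesis. You merely spell out the final step (the non-integer coordinates $(\tfrac12,\tfrac12,0)$ in the basis) slightly more explicitly than the paper does.
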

 
 We will show that if $n_{0i}\equiv1\bmod4$ for all $i$, then $T$ cannot satisfy the Gauss sum relation for $n=4$. In this
 case, each $v_i$ has exactly one odd coordinate and two even. Since $\frac{1}{2}(v_i+v_j)\notin\ZZ^3$, different coordinates
 in the vectors $v_i$ are odd (or in simple terms, the entries $\bmod2$ of the matrix whose columns are $v_i$ is equal to the
 identity matrix). This shows that the coordinates of $v_1+v_2+v_3$ are all odd. Therefore,
 \[\ka(4)=\sum_{0\leq i<j\leq3}e\left(\frac{n_{ij}}{4}\right)+e\left(\frac{3}{4}\right)=-3+2i.\]
 Since $n_{ij}\equiv2\bmod4$ for $1\leq i<j\leq3$, we have $G(n_{ij},4)=2G(n_{ij}/2,2)=0$ by \eqref{quadGauss} and \ref{dihedral}
 we get
 \[G_T(4)=-1+\sum_{i=1}^3\om_{0i}G(4)+\ka(4)=-1+\frac{2}{3}\cdot2(1+i)-3+2i=-\frac{8}{3}+\frac{10}{3}i,\]
 while by 
 \eqref{quadGauss} again we have
 \begin{equation}
 \vol(T)G(4)^3=\frac{1}{6}[2(1+i)]^3=\frac{8}{3}(-1+i)\neq G_T(4).
 \end{equation}
 
 Hence, we may assume that $n_{03}\equiv3\bmod4$, while $n_{01}\equiv n_{02}\equiv1\bmod4$. It is not hard to see that 
 $\norm{v_1+v_2+v_3}^2\equiv1\bmod4$. Therefore,
 \[\ka(4)=\sum_{0\leq i<j\leq3}e\left(\frac{n_{ij}}{4}\right)+e\left(\frac{1}{4}\right)=-3+2i\]
 and
 \begin{eqnarray*}
 G_T(4) &=& -1+(\om_{01}+\om_{02})G(4)+\om_{03}G(3,4)-3+2i\\
 &=& [2(\om_{01}+\om_{02}+\om_{03})-4]+[2(\om_{01}+\om_{02}-\om_{03})+2]i\\
 &=& -\frac{8}{3}+\left[\frac{10}{3}-4\om_{03}\right]i,
 \end{eqnarray*}
 by \eqref{oddsum}, therefore $\om_{03}=1/6$ since $\vol(T)G(4)^3=\frac{8}{3}(-1+i)$. We also get $\om_{01}+\om_{02}=1/2$ from
 \eqref{oddsum}.
 
 Applying \eqref{OS} for $i=0$ we get
 \begin{equation}\label{OSom0}
 \cot2\pi\om_0=\sqrt{n_{01}n_{02}n_{03}}+\scal{v_1,v_2}\sqrt{n_{03}}
 +\scal{v_2,v_3}\sqrt{n_{01}}+\scal{v_3,v_1}\sqrt{n_{02}},
 \end{equation}
 so by \eqref{solid0} we get
 \begin{equation}\label{tansolid}
 \sqrt{3}=\sqrt{n_{01}n_{02}n_{03}}+\scal{v_1,v_2}\sqrt{n_{03}}
 +\scal{v_2,v_3}\sqrt{n_{01}}+\scal{v_3,v_1}\sqrt{n_{02}}.
 \end{equation}
 Let $K=\QQ(\sqrt{n_{01}},\sqrt{n_{02}})$. Since $n_{01}\equiv n_{02}\equiv1\bmod4$, we have $\sqrt{q}\notin K$ for any $q\equiv3\bmod4$.
 This is trivial if $K=\QQ$, as $q$ cannot be a square. If $\sqrt{q}\in K\neq\QQ$, then $\QQ(\sqrt{q})$ is a quadratic
 subfield of $K$. The quadratic subfields are exactly $\QQ(\sqrt{n_{01}})$, $\QQ(\sqrt{n_{02}})$, and $\QQ(\sqrt{n_{01}n_{02}})$
 (they coincide if $[K:\QQ]=2$), which yields that $q$ has the same square-free part with one of $n_{01}$, $n_{02}$, $n_{01}n_{02}$,
 but this is impossible as $q\equiv3\bmod4$ while $n_{01}\equiv n_{02}\equiv n_{01}n_{02}\equiv1\bmod4$. Therefore,
 $[K(\sqrt{n_{03}}):K]=2$, and $1, \sqrt{n_{03}}$ is a $K$-basis of $K(\sqrt{n_{03}})$. As $\sqrt{3}\in K(\sqrt{n_{03}})\sm K$ by
 \eqref{tansolid}, we get $\sqrt{3}=a+b\sqrt{n_{03}}$ for some $a,b\in K$ with $b\neq0$. Squaring both sides we obtain
 $3=a^2+b^2n_{03}+2ab\sqrt{n_{03}}$, so we must have $a=0$.
 Again, by \eqref{tansolid} we get
 \[\scal{v_2,v_3}\sqrt{n_{01}}+\scal{v_3,v_1}\sqrt{n_{02}}=0.\]
 If $n_{01}$ and $n_{02}$ do not have the same square-free part, then $\sqrt{n_{01}}$ and $\sqrt{n_{02}}$ are linearly independent
 over $\QQ$, so we must have
 \[\scal{v_2,v_3}=\scal{v_3,v_1}=0,\]
 a contradiction, since
 \[2\scal{v_2,v_3}=n_{02}+n_{03}-n_{23}\equiv2\bmod4.\]
 So $n_{01}$ and $n_{02}$ have the same square-free part, hence $\sqrt{n_{01}n_{02}}\in\ZZ$, and by \eqref{tansolid} we obtain
 \[\sqrt{3}=(\sqrt{n_{01}n_{02}}+\scal{v_1,v_2})\sqrt{n_{03}}.\]
 Since $\sqrt{n_{03}}\geq3$ and $\sqrt{n_{01}n_{02}}+\scal{v_1,v_2}\geq1$ (as an integer), we must have equality in both cases, which
 yields $n_{03}=3$. 
 \begin{prop}\label{vectors}
  With notation as above, let $n_{03}=3$, and assume that $\om_{03}=1/6$. Then, up to an appropriate action of $\mc{W}$, we may assume that
  \[v_1=(k+1,k,k),\	v_2=(l,l,l-1),\	v_3=(1,1,1).\]
 \end{prop}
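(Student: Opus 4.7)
The plan is to control $v_1,v_2$ by projecting them onto the plane $H:=v_3^\perp$, where the image lattice becomes hexagonal, and to extract the geometric content of $\om_{03}=1/6$ as an angle constraint in $H$. First I would reduce $v_3$ to a canonical form: since $n_{03}=\norm{v_3}^2=3$ and $v_3\in\ZZ^3$, every coordinate of $v_3$ equals $\pm1$, so an appropriate product of coordinate sign-changes from $\mc{W}$ brings $v_3$ to $(1,1,1)$, and the stabilizer of $v_3$ in $\mc{W}$ is then exactly the symmetric group $S_3$ acting by permutation of coordinates. I would then introduce the orthogonal projection $\pi\colon\RR^3\to H$ and set $e_i^\perp:=\pi(e_i)$. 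A direct computation gives $\norm{e_i^\perp}^2=2/3$, $\scal{e_i^\perp,e_j^\perp}=-1/3$ for $i\neq j$, and $e_1^\perp+e_2^\perp+e_3^\perp=0$, so $\La:=\pi(\ZZ^3)$ is the hexagonal lattice in $H$ whose shortest vectors are exactly $\pm e_1^\perp,\pm e_2^\perp,\pm e_3^\perp$ (each of squared norm $2/3$) and whose covolume is $1/\sqrt{3}$. Since $v_3=(1,1,1)$ is primitive in $\ZZ^3$, $\ker\pi|_{\ZZ^3}=\ZZ v_3$, and since $v_1,v_2,v_3$ is a $\ZZ$-basis of $\ZZ^3$, the pair $\pi(v_1),\pi(v_2)$ forms a $\ZZ$-basis of $\La$.

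Next I would translate the hypothesis on $\om_{03}$ into a planar angle and classify the resulting bases. The dihedral angle at the edge $v_0v_3$ equals, in our normalization, $1/(2\pi)$ times the Euclidean angle between $\pi(v_1)$ and $\pi(v_2)$ (the angle one sees when looking along the edge), so $\om_{03}=1/6$ becomes $\angle(\pi(v_1),\pi(v_2))=\pi/3$. The basis identity $|\det(\pi(v_1),\pi(v_2))|=1/\sqrt{3}$ combined with $|\det|=\norm{\pi(v_1)}\norm{\pi(v_2)}\sin(\pi/3)$ now forces $\norm{\pi(v_1)}\norm{\pi(v_2)}=2/3$. Since $\norm{\pi(v)}^2\geq 2/3$ for every nonzero $v\in\La$, equality must hold throughout and both $\pi(v_1)$, $\pi(v_2)$ are shortest vectors, so each equals $\pm e_j^\perp$ for some $j$; the $\pi/3$ condition then pins the combinatorics down to $(\pi(v_1),\pi(v_2))=(\eps e_i^\perp,-\eps e_j^\perp)$ for some $i\neq j$ and $\eps\in\{\pm1\}$.

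Finally I would normalize using the remaining Weyl freedom. After swapping the labels $v_1\leftrightarrow v_2$ if $\eps=-1$ (permissible since $n_{01}$ and $n_{02}$ play symmetric roles throughout the preceding analysis) I would have $\eps=+1$; the unique $\sigma\in S_3$ with $\sigma(i)=1$, $\sigma(j)=3$ stabilizes $v_3$ and carries the pair to $(e_1^\perp,-e_3^\perp)$. Lifting through $\ker\pi|_{\ZZ^3}=\ZZ v_3$ gives $v_1=e_1+kv_3=(k+1,k,k)$ and $v_2=-e_3+lv_3=(l,l,l-1)$ for some $k,l\in\ZZ$, as required. The only point that needs care is the identification of $\om_{03}$ with the planar angle between $\pi(v_1)$ and $\pi(v_2)$ in the correct (interior) orientation, which follows from the standard geometry of a convex tetrahedron; everything else is a clean linear-algebra and group-theoretic normalization in the rank-$2$ hexagonal lattice.
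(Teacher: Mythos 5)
Your proof is correct and follows essentially the same route as the paper's: normalize $v_3=(1,1,1)$ by coordinate sign changes, project onto $H=v_3^{\perp}$ to land in the hexagonal lattice, use $\om_{03}=1/6$ together with the basis/covolume condition to force $\pi(v_1),\pi(v_2)$ to be shortest vectors at angle $\pi/3$, and normalize by the stabilizer $S_3$ before lifting along $\ZZ v_3$. You are in fact somewhat more careful than the paper, which asserts the minimal-length claim without your determinant argument and silently absorbs the orientation case $(\pi(v_1),\pi(v_2))=(-e_i^{\perp},e_j^{\perp})$ that you handle explicitly by swapping the labels of $v_1$ and $v_2$.
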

 
 \begin{proof}
 Applying an appropriate reflection from the group $\mc{W}$, we may assume without loss of generality that
 \[v_3=(1,1,1).\]
 Now consider the hyperplane $H=v_3^{\perp}$, and let $\La$ be the orthogonal projection of $\ZZ^3$ onto $H$. It is not hard to see
 that $\La$ is isomorphic to the hexagonal lattice, and the vectors of smallest length are $\pi(\pm\bs e_i)$, where
 $\pi:\RR^3\rightarrow H$ is the orthogonal projection. By hypothesis, $\pi(v_1)$ and $\pi(v_2)$ is a basis of $\La$ and the angle
 between these two vectors is $\pi/3$ by $\om_{03}=1/6$, therefore they must be of smallest length. Permutations of coordinates
 of $\RR^3$ correspond to rotations of $H$ by multiples of $\pi/3$ or reflections along $\pi(\bs e_i)$, so without loss of
 generality we may assume that $\pi(v_1)=\pi(\bs e_1)$ and $\pi(v_2)=\pi(-\bs e_3)$, hence
 \[v_1=(k+1,k,k),\	v_2=(l,l,l-1).\qedhere\]
 \end{proof}
 From Proposition \ref{vectors} and the fact that $n_{01}$ and $n_{02}$ are odd, follows that $k$ and $l$ are even in our case.
 Since $\om_{01}+\om_{02}=1/2$, we will have
 \[\cos2\pi\om_{01}+\cos2\pi\om_{02}=0.\]
 But
 \[\cos2\pi\om_{01}=\frac{\scal{v_1\times v_2,v_1\times v_3}}{\norm{v_1\times v_2}\cdot\norm{v_1\times v_3}}
 =\frac{\scal{(-k,k-l+1,l),(0,-1,1)}}{\sqrt{2}\sqrt{k^2+l^2+(k-l+1)^2}}=\frac{-k+2l-1}{\sqrt{2k^2+2l^2+2(k-l+1)^2}},\]
 while
 \[\cos2\pi\om_{02}=\frac{\scal{v_2\times v_1,v_2\times v_3}}{\norm{v_2\times v_1}\cdot\norm{v_2\times v_3}}
 =\frac{\scal{(k,-k+l-1,-l),(1,-1,0)}}{\sqrt{2}\sqrt{k^2+l^2+(k-l+1)^2}}=\frac{2k-l+1}{\sqrt{2k^2+2l^2+2(k-l+1)^2}},\]
 therefore we must have $k=l$, hence
 \[v_2=(k,k,k-1).\]
 As we've seen above, $n_{01}=3k^2+2k+1$ and $n_{02}=3k^2-2k+1$ must have the same square free part, say $d$. But this $d$ is odd and
 also a common divisor of $n_{01}$ and $n_{02}$, therefore $d|n_{01}-n_{02}=4k$, so $d|k$. Since $\gcd(k,n_{01})=1$, we must have
 $d=1$, so $n_{01}$ and $n_{02}$ are both perfect (odd) squares. Let $m,n\geq0$ be such that
 \begin{equation}\label{kmn}
 \begin{split}
 3k^2+2k+1 &= (2m+1)^2\\ 
 3k^2-2k+1 &= (2n+1)^2,
 \end{split}
 \end{equation}
 which yields
 \[k=(m-n)(m+n+1).\]
 Adding the equations \eqref{kmn} we get 
 \[3k^2=2(m^2+n^2+m+n).\]
 If $m\neq n$, we obtain
 \[3k^2\geq3(m+n+1)^2=3(m^2+n^2+1+2mn+2m+2n)>2(m^2+n^2+m+n),\]
 so we must have $m=n$ and $k=0$. Therefore,
 \[v_1=(1,0,0),\	v_2=(0,0,-1).\]
 Next, we will verify that the Gauss sum relation for $n=3$ fails. We have
 \[n_{01}=n_{02}=1, n_{03}=2, n_{12}=n_{13}=2, n_{23}=6,\]
 and
 \begin{equation*}
 \vol(T)(G(3))^3=-i\frac{\sqrt{3}}{2},
 \end{equation*}
 but
 \begin{eqnarray*}
 G_T(3) &=& -1+\sum_{0\leq i<j\leq3}\om_{ij}G(n_{ij},3)+\ka(3)\\
 &=& -1+(\om_{01}+\om_{02})G(3)+(\om_{12}+\om_{13})G(2,3)+3(\om_{03}+\om_{23})+\frac{1}{2}[1+3e(2/3)]\\
 &=& -1+i\frac{\sqrt{3}}{2}-(1/2-\om_{23})i\sqrt{3}+3(1/6+\om_{23})+\frac{1}{2}\left[-2-i\frac{3\sqrt{3}}{2}\right]\\
 &=& 3(\om_{23}-1/2)+(\om_{23}-5/4)i\sqrt{3}.
 \end{eqnarray*}
 Taking real and imaginary parts, if $G_T(3)=\vol(T)(G(3))^3$ then we should have simultaneously have $\om_{23}=1/2$ and
 $\om_{23}=3/4$, an absurdity. We thus conclude that:
 
 \begin{prop}
 Let $T=\conv(0,v_1,v_2,v_3)$ with $v_1$, $v_2$, $v_3$ basis of $\ZZ^3$, such that all $\norm{v_i}^2$ are odd. Then $T$ cannot satisfy
 the Gauss sum relations. In particular, $G_T(n)=\vol(T)(G(n))^3$ fails for some $n\leq4$.
 \end{prop}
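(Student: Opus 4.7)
The strategy is a case analysis driven by the residues of $n_{01}, n_{02}, n_{03}$ modulo $4$, using the sharp identities $G_T(n) = \vol(T)G(n)^3$ for $n \in \{1,2,3,4\}$. Proposition \ref{const} already yields $\sum \om_i = 1/6$ and, via the Gram relation, $\sum \om_{ij} = 7/6$. The $n=2$ identity, combined with the observation that $n_{ij} \equiv 2 \pmod 4$ for $1 \le i < j \le 3$ (since the edge vectors $v_i - v_j$ are primitive), collapses the sum in Proposition \ref{dihedral} to $\om_{12} + \om_{13} + \om_{23} = 1/2$, hence $\om_{01} + \om_{02} + \om_{03} = 2/3$ and $\om_0 = 1/12$.

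For the mod $4$ structure of the $n_{0i}$, the basis condition forbids any two of $v_1, v_2, v_3$ from having all coordinates odd (else their half-sum would lie in $\ZZ^3$), so at most one $n_{0i}$ is $\equiv 3 \pmod 4$. If all three are $\equiv 1 \pmod 4$, each $v_i$ has exactly one odd coordinate, and the basis condition forces these odd coordinates to occupy distinct positions; hence $v_1+v_2+v_3$ has all odd coordinates. Evaluating $G_T(4)$ from Proposition \ref{dihedral} and the remark's formula for $\ka(4)$ produces a real part of $-8/3$ and an imaginary part incompatible with $\vol(T)G(4)^3 = \tfrac{8}{3}(-1+i)$, ruling out this subcase.

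Otherwise, exactly one $n_{0i}$ (say $n_{03}$) is $\equiv 3 \pmod 4$. The same $G_T(4)$ computation then pins down $\om_{03} = 1/6$. I would feed $\om_0 = 1/12$ into the simplified Oosterom--Strackee formula \eqref{OS}, producing $\sqrt{3}$ as an explicit element of the multiquadratic field $K_0 = \QQ(\sqrt{n_{01}}, \sqrt{n_{02}}, \sqrt{n_{03}})$. A quadratic subfield argument, using that $\sqrt{q} \notin \QQ(\sqrt{n_{01}}, \sqrt{n_{02}})$ for any $q \equiv 3 \pmod 4$ (since $n_{01}, n_{02} \equiv 1 \pmod 4$ forces every quadratic subfield to have square-free part $\equiv 1 \pmod 4$), forces the $\QQ(\sqrt{n_{01}}, \sqrt{n_{02}})$-rational summand of \eqref{OS} to vanish. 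Inner-product parity on the remaining terms (using $2\scal{v_2,v_3} \equiv 2 \pmod 4$) then compels $n_{01}$ and $n_{02}$ to share a square-free part and isolates $n_{03} = 3$.

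With $n_{03} = 3$ and $\om_{03} = 1/6$, Proposition \ref{vectors} puts the basis in canonical form $v_1 = (k+1, k, k)$, $v_2 = (l, l, l-1)$, $v_3 = (1,1,1)$. The constraint $\om_{01} + \om_{02} = 1/2$ yields $\cos 2\pi\om_{01} + \cos 2\pi\om_{02} = 0$, and a direct cross-product computation reduces this to $k = l$. Requiring $n_{01}, n_{02}$ to share a square-free part then forces both $3k^2 \pm 2k + 1$ to be perfect odd squares; writing the pair as $(2m+1)^2$ and $(2n+1)^2$ gives $3k^2 = 2(m^2+n^2+m+n)$ with $k = (m-n)(m+n+1)$, and a size estimate forces $m = n$ and hence $k = 0$. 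This pins down the unique remaining tetrahedron $v_1 = (1,0,0)$, $v_2 = (0,0,-1)$, $v_3 = (1,1,1)$, for which direct evaluation of $G_T(3)$ via Proposition \ref{dihedral} and the explicit formula for $\ka(3)$ produces a value whose real and imaginary parts demand simultaneously $\om_{23} = 1/2$ and $\om_{23} = 3/4$, a contradiction. The main obstacle is the algebraic step that forces $n_{03} = 3$: one must carefully track square-free parts across the quadratic subfields of $K_0$ and separate the contributions of $\sqrt{n_{03}}$ from those lying in $\QQ(\sqrt{n_{01}}, \sqrt{n_{02}})$ before the inner-product integrality constraints can be brought to bear.
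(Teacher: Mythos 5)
Your proposal is correct and follows essentially the same route as the paper: the same $n=2$ and $n=4$ reductions to $\om_0=1/12$ and $\om_{03}=1/6$, the same multiquadratic-field argument via the Oosterom--Strackee formula to force $n_{03}=3$, the same canonical form $v_1=(k+1,k,k)$, $v_2=(l,l,l-1)$, $v_3=(1,1,1)$ with the cosine identity giving $k=l$ and the perfect-square analysis giving $k=0$, and the same final contradiction from $G_T(3)$.
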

 
 \bigskip
 \noindent
 $\boxed{n_{02}=\norm{v_2}^2\text{ is even}}$ Then, $n_{02}\equiv n_{13}\equiv2\bmod4$, and all other $n_{ij}$ are odd. As we have
 already seen, two adjacent edges cannot have both squared length $3\bmod4$, so there are at most two of them in $T$. So, we may
 assume that $v_1\equiv(1,0,0)\bmod2\ZZ^3$. If $n_{03}\equiv1\bmod4$, then, up to the action of group $\mc{W}$ and possibly
 interchanging $v_1$ and $v_3$ we will have
 \[A\equiv\begin{pmatrix}
 1 & 1 & 0\\
 0 & 1 & 0\\
 0 & 0 & 1
 \end{pmatrix},\]
 if we consider the entries of $A=(v_1^T\  v_2^T\  v_3^T)$ taken $\bmod2$. Then, it is clear that exactly one edge satisfies
 $n_{ij}\equiv3\bmod4$, in particular $n_{23}$. If $n_{03}\equiv3\bmod4$, then again, up to the action of $\mc{W}$ we will have
 \begin{equation}\label{matrix}
 A\equiv\begin{pmatrix}
 1 & 1 & 1\\
 0 & 1 & 1\\
 0 & 0 & 1
 \end{pmatrix},
 \end{equation}
 and again, only one edge satisfies $n_{ij}\equiv3\bmod4$, this time $n_{03}$. So, in any case, there is exactly one edge satisfying
 $n_{ij}\equiv3\bmod4$, and after an appropriate lattice translation, we can always take $n_{03}$ to be that edge. Without loss
 of generality, $A$ satisfies \eqref{matrix} and we have
 \begin{equation}\label{lengths}
 n_{01}\equiv n_{12}\equiv n_{23}\equiv1\bmod4, \	n_{02}\equiv n_{13}\equiv2\bmod4, \	n_{03}\equiv3\bmod4,
 \end{equation}
 or more succinctly,
 \[n_{ij}\equiv j-i\bmod4.\]
 Also from \eqref{matrix} we get that
 \begin{equation}\label{innerprods}
 \scal{v_1,v_2}\text{ and }\scal{v_1,v_3}\text{ are odd, while }\scal{v_2,v_3}\text{ is even}. 
 \end{equation}
 By Proposition \ref{dihedral} and \eqref{lengths}, the Gauss sum relation for $n=2$ becomes
 \[0=G_T(2)=-1+2(\om_{02}+\om_{13}),\]
 therefore,
 \begin{equation}\label{sumeven}
 \om_{02}+\om_{13}=\frac{1}{2},
 \end{equation}
 and
 \begin{equation}\label{sumodd}
 \om_{01}+\om_{12}+\om_{23}+\om_{03}=\frac{2}{3},
 \end{equation}
 because of \eqref{sumdihedral}.
 By \eqref{matrix} and \eqref{lengths} we get
 \[\ka(4)=\sum_{0\leq i<j\leq3}e\left(\frac{n_{ij}}{4}\right)+e\left(\frac{\norm{v_1+v_2+v_3}^2}{4}\right)=-3+2i,\]
 hence Proposition \ref{dihedral} for $n=4$ yields
 \begin{eqnarray*}
 G_T(4) &=& -1+(\om_{01}+\om_{12}+\om_{23})G(4)+\om_{03}G(3,4)-3+2i\\
 &=& -4+2(\om_{01}+\om_{12}+\om_{23}+\om_{03})+2(\om_{01}+\om_{12}+\om_{23}-\om_{03})i+2i\\
 &=& -\frac{8}{3}+\left(\frac{10}{3}-4\om_{03}\right)i,
 \end{eqnarray*}
 while $\vol(T)(G(4))^3=\frac{8}{3}(-1+i)$, so if the Gauss sum relation holds for $n=4$, then we get
 \begin{equation}\label{sum3mod4}
 \om_{03}=\frac{1}{6},
 \end{equation}
 and
 \begin{equation}\label{sum1mod4}
 \om_{01}+\om_{12}+\om_{23}=\frac{1}{2},
 \end{equation}
 by \eqref{sumodd}. Next, we consider again the orthogonal projection $\pi:\RR^3\rightarrow H$, where $H=v_3^{\perp}$ and put
 $\La=\pi(\ZZ^3)$. The vectors $\pi(v_1)$ and $\pi(v_2)$ is a basis of $\La$ and the angle between them is equal to the dihedral angle
 $\om_{03}$. However, the lattice $\La$ contains also vectors orthogonal to $\pi(v_1)$, namely $v_3\times v_1$, so let
 $a\pi(v_1)+b\pi(v_2)$ be orthogonal to $\pi(v_1)$, with $a,b\in\ZZ$ nonzero. Hence, the orthogonal projection of $b\pi(v_2)$ on
 $\RR\pi(v_1)$ is equal to $-a\pi(v_1)$, therefore $\norm{-a\pi(v_1)}=\frac{1}{2}\norm{b\pi(v_2)}$ or
 \[\norm{\pi(v_2)}=\abs{\frac{a}{2b}}\norm{\pi(v_1)}.\]
 Since $v_1$, $v_2$, $v_3$ is a basis of $\ZZ^3$ we have
 \begin{eqnarray*}
 1 &=& \abs{\scal{v_3,v_1\times v_2}}=\abs{\scal{v_3,\pi(v_1)\times\pi(v_2)}}=\sqrt{n_{03}}\norm{\pi(v_1)\times\pi(v_2)}\\
 &=& \sqrt{n_{03}}\norm{\pi(v_1)}\norm{\pi(v_2)}\sin2\pi\om_{03}=\sqrt{3n_{03}}\abs{\frac{a}{4b}}\norm{\pi(v_1)}^2,
 \end{eqnarray*}
 and since $\abs{\frac{a}{4b}}\norm{\pi(v_1)}^2\in\QQ$ we must have 
 \begin{equation}\label{n03}
  n_{03}=3m^2,
 \end{equation}
  for some $m\in\ZZ$.
  
  


  Next, the Gram relations \eqref{Gram} along with \eqref{sumeven} and \eqref{sum1mod4} form a system of six linear equations in terms of the dihedral angles
  $\om_{ij}$. This system has a unique solution, namely,
  \begin{eqnarray}
\label{01}   \om_{01} &=& \phantom{-}\tfrac{1}{2}\om_0-\tfrac{1}{2}\om_1-\tfrac{3}{2}\om_2-\tfrac{1}{2}\om_3+\tfrac{1}{4}\\ \label{02}
   \om_{02} &=& \phantom{-}\tfrac{1}{2}\om_0-\tfrac{1}{2}\om_1+\tfrac{1}{2}\om_2-\tfrac{1}{2}\om_3+\tfrac{1}{4}\\ \label{03}
   \om_{03} &=& \phantom{-\tfrac{1}{2}}\om_0+\phantom{\tfrac{1}{2}}\om_1+\phantom{\tfrac{1}{2}}\om_2+\phantom{\tfrac{1}{2}}\om_3\\ \label{12}
   \om_{12} &=& \phantom{-\tfrac{1}{2}\om_1+\ \ }2\om_1+2\om_2\\ \label{13}
   \om_{13} &=& -\tfrac{1}{2}\om_0+\tfrac{1}{2}\om_1-\tfrac{1}{2}\om_2+\tfrac{1}{2}\om_3+\tfrac{1}{4}\\ \label{23}
   \om_{23} &=& -\tfrac{1}{2}\om_0-\tfrac{3}{2}\om_1-\tfrac{1}{2}\om_2+\tfrac{1}{2}\om_3+\tfrac{1}{4}.
  \end{eqnarray}
Formulae \eqref{02} and \eqref{13} along with \eqref{Gauss1} yield
\begin{equation}\label{02-13}
 \om_{02}-\om_0-\om_2=\om_{13}-\om_1-\om_3=1/6.
\end{equation}
In order to visualize $\om_{02}-\om_0-\om_2$, we consider $T$ and its translate $T-v_2$, as in the figure below. 
\begin{center}
\begin{tikzpicture}
\draw (0,0) node[below] {$v_0=(0,0,0)$} -- (3,6) node[above] {$v_1$};
\draw (0,0) -- (6,0) node[right] {$v_2$};
\draw (3,6) -- (6,0);
\draw (3,6) -- (6,3) node[right] {$v_3$};
\draw (6,0) -- (6,3);
\draw[dashed] (0,0) -- (6,3);
\draw (0,0) -- (-6,0) node[left] {$-v_2$};
\draw (0,0) -- (-3,6) node[above] {$v_1-v_2$};
\draw (-6,0) -- (-3,6);
\draw (0,0) -- (0,3) node[left] {$v_3-v_2$};
\draw (0,3) -- (-3,6);
\draw[dashed] (0,3) -- (-6,0);
\draw[dotted] (3,6) -- (0,3);
\end{tikzpicture}
\end{center}
As can be seen, $\om_{02}-\om_0-\om_2$ is the solid angle of the cone with vectors $v_1$, $v_3$, $v_3-v_2$, $v_1-v_2$, which we divide into two simplicial
cones, one with vectors $v_3$, $v_3-v_2$, $v_1$, and one with $v_3-v_2$, $v_1-v_2$, $v_1$. We denote the solid angles by $\Om_1$, $\Om_2$,
respectively. Then, from \eqref{02-13} we get
\begin{equation}\label{Om}
 \Om_1+\Om_2=\frac{1}{6}.
\end{equation}
By \eqref{OS} we get
\begin{equation}
\cot2\pi\Om_1=\sqrt{n_{01}n_{23}n_{03}}+\scal{v_3,v_3-v_2}\sqrt{n_{01}}+\scal{v_3-v_2,v_1}\sqrt{n_{03}}+\scal{v_1,v_3}\sqrt{n_{23}} 
\end{equation}
and
\begin{equation}\label{OSOm2}
 \cot2\pi\Om_2=\sqrt{n_{01}n_{12}n_{23}}+\scal{v_3-v_2,v_1-v_2}\sqrt{n_{01}}+\scal{v_1-v_2,v_1}\sqrt{n_{23}}+\scal{v_1,v_3-v_2}\sqrt{n_{12}}.
\end{equation}
Put $K=\QQ(\sqrt{n_{01}},\sqrt{n_{12}},\sqrt{n_{23}})$. By \eqref{lengths} we have $\sqrt{n_{03}}\notin K$. We observe that $\cot2\pi\Om_2\in K$ and
by \eqref{Om} we have
\begin{equation}
 \frac{1}{\sqrt{3}}=\frac{\cot2\pi\Om_1\cot2\pi\Om_2-1}{\cot2\pi\Om_1+\cot2\pi\Om_2}
\end{equation}
or equivalently
\begin{equation}\label{cotOm}
 \cot2\pi\Om_1+\cot2\pi\Om_2=\sqrt{3}\cot2\pi\Om_1\cot2\pi\Om_2-\sqrt{3}.
\end{equation}
As $1$ and $\sqrt{3}$ are $K$-linearly independent we get
\begin{equation}\label{cotOm2}
 \cot2\pi\Om_2=\frac{\scal{v_3,v_3-v_2}\sqrt{n_{01}}+\scal{v_1,v_3}\sqrt{n_{23}}}{3m(\sqrt{n_{01}n_{23}}+\scal{v_3-v_2,v_1})-1}=
 \frac{m(\sqrt{n_{01}n_{23}}+\scal{v_3-v_2,v_1})+1}{\scal{v_3,v_3-v_2}\sqrt{n_{01}}+\scal{v_1,v_3}\sqrt{n_{23}}},
\end{equation}
by \eqref{cotOm} and \eqref{n03}. \eqref{cotOm2} yields $\cot2\pi\Om_2\in\QQ(\sqrt{n_{01}},\sqrt{n_{23}})$, and then by \eqref{OSOm2} we get
\begin{equation}\label{lowdegree1}
 \sqrt{n_{12}}\in\QQ(\sqrt{n_{01}},\sqrt{n_{23}}).
\end{equation}
Indeed, if $\sqrt{n_{12}}\notin\QQ(\sqrt{n_{01}},\sqrt{n_{23}})$, then $1$ and $\sqrt{n_{12}}$ are $\QQ(\sqrt{n_{01}},\sqrt{n_{23}})$-linearly independent, and the coefficient
of $\sqrt{n_{12}}$ in \eqref{OSOm2} is $\sqrt{n_{01}n_{23}}+\scal{v_1,v_3-v_2}$ which is nonzero, since $v_1$ and $v_3-v_2$ are not parallel. This would yield
$\cot2\pi\Om_2\notin\QQ(\sqrt{n_{01}},\sqrt{n_{23}})$, a contradiction.

Combining the two equations in \eqref{cotOm2} we get
\begin{equation}
 \cot^22\pi\Om_2=\frac{m(\sqrt{n_{01}n_{23}}+\scal{v_3-v_2,v_1})+1}{3m(\sqrt{n_{01}n_{23}}+\scal{v_3-v_2,v_1})-1},
\end{equation}
so $\cot^22\pi\Om_2\in\QQ(\sqrt{n_{01}n_{23}})$ and by \eqref{OSOm2} $\cot^22\pi\Om_2$ is an algebraic integer.

\begin{prop}\label{n03=3}
 If $\sqrt{n_{01}n_{23}}\in\QQ$ then $m=1$, hence $n_{03}=3$ and $\cot2\pi\Om_2=1$, hence $\Om_2=1/8$ and $\Om_1=1/24$.
\end{prop}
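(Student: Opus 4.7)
The plan is to exploit the integrality of $\cot^2 2\pi \Om_2$, established in the discussion just before the statement, together with the rationality hypothesis on $\sqrt{n_{01}n_{23}}$, to force a tight divisibility condition on $m$.

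First I would observe that, since $n_{01}$ and $n_{23}$ are positive integers with rational $\sqrt{n_{01}n_{23}}$, this square root is in fact a nonnegative integer. Setting $N := \sqrt{n_{01}n_{23}} + \scal{v_3-v_2,v_1} \in \ZZ$, the derived formula for $\cot^2 2\pi \Om_2$ becomes
\[
\cot^2 2\pi \Om_2 = \frac{mN+1}{3mN-1},
\]
which is rational. Since $\cot^2 2\pi \Om_2$ is also an algebraic integer, it must be a rational integer, and hence $(3mN-1)\mid(mN+1)$ in $\ZZ$. Multiplying by $3$ gives $(3mN-1)\mid 3(mN+1)=(3mN-1)+4$, whence $(3mN-1)\mid 4$.

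Next I would enumerate the surviving cases. The divisors of $4$ that are $\equiv -1\pmod 3$ are $-4$, $-1$, and $2$, corresponding to $mN\in\{-1,0,1\}$. Ruling out the two spurious values is the main obstacle. The value $mN=0$ yields $\cot^2 2\pi \Om_2 = -1$, which is impossible. The value $mN=-1$ yields $\cot^2 2\pi \Om_2 = 0$, forcing $\Om_2 = 1/4$; but \eqref{Om} together with $\Om_1, \Om_2 > 0$ (both are solid angles of nondegenerate simplicial cones, since $v_1,v_2,v_3$ is a basis of $\ZZ^3$) gives $\Om_2 < 1/6 < 1/4$, a contradiction. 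Therefore $mN = 1$, and since $m$ is a positive integer (as $n_{03} = 3m^2 > 0$), we conclude $m = N = 1$.

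Finally, $m=1$ gives $n_{03} = 3m^2 = 3$, and $mN=1$ gives $\cot^2 2\pi \Om_2 = 1$. Because $0 < \Om_2 < 1/6 < 1/4$, we have $2\pi\Om_2 \in (0,\pi/3)$, so $\cot 2\pi \Om_2 > 0$; hence $\cot 2\pi \Om_2 = 1$, i.e.\ $\Om_2 = 1/8$, and $\Om_1 = 1/6 - 1/8 = 1/24$, as claimed.
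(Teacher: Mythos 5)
Your proof is correct and rests on exactly the same foundation as the paper's: the integrality of $\cot^2 2\pi\Om_2$ together with the formula $\cot^2 2\pi\Om_2=\frac{mz+1}{3mz-1}$. The only divergence is the endgame --- the paper first shows $z\geq 1$ via Cauchy--Schwarz and then deduces $mz\leq 1$ from $\frac{mz+1}{3mz-1}\geq 1$, whereas you derive $(3mN-1)\mid 4$ and eliminate $mN\in\{-1,0\}$ using positivity of the solid angles and the bound $\Om_2<1/6$ from \eqref{Om}; both arguments are sound, and yours has the small merit of making the sign choice $\cot 2\pi\Om_2=+1$ explicit.
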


\begin{proof}
 If $\sqrt{n_{01}n_{23}}\in\QQ$ then $\cot^22\pi\Om_2\in\ZZ$. Put $z=\sqrt{n_{01}n_{23}}+\scal{v_3-v_2,v_1}$. By Cauchy-Schwarz inequality we have
 $z>0$, and since $z\in\ZZ$ we must have $z\geq1$. Then
 \[\frac{mz+1}{3mz-1}\geq1,\]
 whence $mz\leq1$, thus $m=z=1$, which proves that $n_{03}=3$ and $\cot2\pi\Om_2=1$, hence $\Om_2=1/8$. Finally, by \eqref{Om} we get $\Om_1=1/24$. 
\end{proof}

Our goal is to show that the hypothesis of this Proposition is true. The next equation that we'll investigate is
\begin{equation}\label{om02om2om01}
 \om_{02}-2\om_2=\om_{01},
\end{equation}
which follows from \eqref{01} and \eqref{02}. From \eqref{external} we then get
\begin{equation}\label{om02-2om2}
\om_{02}-2\om_2=\vphi_{20}-\om_2,
\end{equation}
and 
\begin{equation}\label{om01}
\om_{01}=\om_0+\vphi_{01},
\end{equation}
hence
\begin{equation}
 \cot2\pi(\vphi_{20}-\om_2)=\cot2\pi(\om_0+\vphi_{01}).
\end{equation}
Applying \eqref{OS} and \eqref{OSext} accordingly we have
\begin{equation}\label{OSphi01}
\cot2\pi\vphi_{01}=\sqrt{n_{01}n_{02}n_{03}}+\scal{v_2,v_3}\sqrt{n_{01}}-\scal{v_3,v_1}\sqrt{n_{02}}-\scal{v_1,v_2}\sqrt{n_{03}}
\end{equation}
\begin{equation}\label{OSphi20}
\cot2\pi\vphi_{20}=\sqrt{n_{12}n_{02}n_{23}}+\scal{v_2,v_3-v_2}\sqrt{n_{12}}+\scal{v_3-v_2,v_1-v_2}\sqrt{n_{02}}+\scal{v_1-v_2,v_2}\sqrt{n_{23}}
\end{equation}
\begin{equation}\label{OSom2}
\cot2\pi\om_2=\sqrt{n_{12}n_{02}n_{23}}-\scal{v_2,v_3-v_2}\sqrt{n_{12}}+\scal{v_3-v_2,v_1-v_2}\sqrt{n_{02}}-\scal{v_1-v_2,v_2}\sqrt{n_{23}}.
\end{equation}
Then by \eqref{om02-2om2}, \eqref{om01}, \eqref{OSom0}, \eqref{OSphi01}, \eqref{OSphi20}, \eqref{OSOm2},  and the formulae for the cotangent of a sum we get:
\begin{equation}\label{cotom01}
 \cot2\pi\om_{01}=\frac{n_{01}(\sqrt{n_{02}n_{03}}+\scal{v_2,v_3})^2-(\scal{v_3,v_1}\sqrt{n_{02}}+\scal{v_1,v_2}\sqrt{n_{03}})^2-1}{2\sqrt{n_{01}}(\sqrt{n_{02}n_{03}}+\scal{v_2,v_3})}
\end{equation}
and
\begin{equation}\label{cotom02-2om2}
 \cot2\pi(\om_{02}-2\om_2)=\tfrac{(\scal{v_2,v_3-v_2}\sqrt{n_{12}}+\scal{v_1-v_2,v_2}\sqrt{n_{23}})^2-n_{02}(\sqrt{n_{12}n_{23}}+\scal{v_3-v_2,v_1-v_2})^2-1}
 {2(\scal{v_2,v_3-v_2}\sqrt{n_{12}}+\scal{v_1-v_2,v_2}\sqrt{n_{23}})}.
\end{equation}
Rewriting \eqref{cotom01} we get 
\begin{equation}\label{cotom01alt}
 2\sqrt{n_{01}}\cot2\pi\om_{01}=\tfrac{(2n_{01}\scal{v_2,v_3}-2\scal{v_3,v_1}\scal{v_1,v_2})\sqrt{n_{02}n_{03}}+(n_{01}n_{02}n_{03}+n_{01}\scal{v_2,v_3}^2-n_{02}\scal{v_3,v_1}^2
 -n_{03}\scal{v_1,v_2}^2-1)}{\sqrt{n_{02}n_{03}}+\scal{v_2,v_3}}.
\end{equation}
By \eqref{lengths} we have $n_{02}n_{03}\equiv2\pmod4$, hence $\sqrt{n_{02}n_{03}}\notin\QQ(\sqrt{n_{01}},\sqrt{n_{12}},\sqrt{n_{23}})$, therefore
\begin{equation}
 2\sqrt{n_{01}}\cot2\pi\om_{01}\in\QQ(\sqrt{n_{02}n_{03}})\cap\QQ(\sqrt{n_{01}},\sqrt{n_{12}},\sqrt{n_{23}})=\QQ.
\end{equation}
This shows that the numerator and denominator at \eqref{cotom01alt} are $\QQ$-linearly dependent, hence $2\sqrt{n_{01}}\cot2\pi\om_{01}$ is equal to the ratio of the
corresponding coefficients of $\sqrt{n_{02}n_{03}}$, thus
\begin{equation}\label{cotom01int}
 \sqrt{n_{01}}\cot2\pi\om_{01}=n_{01}\scal{v_2,v_3}-\scal{v_3,v_1}\scal{v_1,v_2}\in\ZZ.
\end{equation}
Furthermore, this number is also nonzero, because it is odd, as follows from \eqref{lengths} and \eqref{innerprods}, hence
\begin{equation}\label{lowdegree2}
 \sqrt{n_{01}}\in\QQ(\sqrt{n_{12}},\sqrt{n_{23}}).
\end{equation}
Now we will show that $\QQ(\sqrt{n_{01}})=\QQ(\sqrt{n_{12}})$. If $\QQ(\sqrt{n_{12}},\sqrt{n_{23}})$ is equal to $\QQ$, it is trivial. If it is equal
to a quadratic extension, then $\QQ(\sqrt{n_{12}})=\QQ(\sqrt{n_{23}})\neq\QQ$. If $\sqrt{n_{01}}\in\QQ$, then from \eqref{cotom01int} and \eqref{om02om2om01}
we have that $\cot2\pi(\om_{02}-2\om_2)\in\QQ$. But since $\QQ(\sqrt{n_{12}})=\QQ(\sqrt{n_{23}})\neq\QQ$, the numerator from \eqref{cotom02-2om2} is nonzero rational, while
the denominator is a rational multiple of $\sqrt{n_{12}}$, a contradiction. Hence, in this case, $\QQ(\sqrt{n_{01}})=\QQ(\sqrt{n_{12}})$.

It remains to examine the case where $\QQ(\sqrt{n_{12}},\sqrt{n_{23}})$ is a biquadratic extension. If $\QQ(\sqrt{n_{01}})\neq\QQ(\sqrt{n_{12}})$,
then from \eqref{lowdegree1} and \eqref{lowdegree2} follows that $\QQ(\sqrt{n_{01}})=\QQ(\sqrt{n_{12}n_{23}})$. Recall that by \eqref{om02om2om01} and 
\eqref{cotom01int} we have $\sqrt{n_{01}}\cot2\pi(\om_{02}-2\om_2)\in\ZZ$. However, by \eqref{cotom02-2om2}, the numerator of $\sqrt{n_{01}}\cot2\pi(\om_{02}-2\om_2)$
is a rational linear combination of $1$ and $\sqrt{n_{12}n_{23}}$ and is nonzero, while the denominator is a rational linear combination of $\sqrt{n_{12}}$ and $\sqrt{n_{23}}$,
therefore, they are $\QQ$-linearly independent, and as such their ratio cannot be rational. This contradicts the hypothesis $\QQ(\sqrt{n_{01}})\neq\QQ(\sqrt{n_{12}})$,
hence at all cases we have
\begin{equation}\label{lowdegree3}
 \QQ(\sqrt{n_{01}})=\QQ(\sqrt{n_{12}}).
\end{equation}
Next, \eqref{13} and \eqref{23} yield
\begin{equation}\label{om13om1om23}
 \om_{13}-2\om_1=\om_{23},
\end{equation}
which in turn yields similar formulae to \eqref{cotom01} and \eqref{cotom02-2om2}, where the indices $0$ and $1$ are interchanged with $3$ and $2$, respectively (\emph{notice
that this symmetry is obeyed by the formulae which follow from \eqref{sumeven} and \eqref{sum1mod4}}). Then, similar arguments to those that were used in order
to obtain \eqref{lowdegree3} can be used in order to get
\begin{equation*}
 \QQ(\sqrt{n_{23}})=\QQ(\sqrt{n_{12}}),
\end{equation*}
and thus establish
\begin{equation}\label{lowdegree4}
 \QQ(\sqrt{n_{01}})=\QQ(\sqrt{n_{12}})=\QQ(\sqrt{n_{23}}).
\end{equation}
Therefore, $\sqrt{n_{01}n_{23}}\in\QQ$, hence by Proposition \ref{n03=3} we have $n_{03}=3$, $\Om_1=1/24$, and $\Om_2=1/8$. Then, \eqref{OSOm2} and \eqref{lowdegree4}
yield $1=\cot2\pi\Om_2=d\sqrt{n_{01}}$, for some $d\in\QQ$, thus,
\begin{equation}\label{lowdegree5}
 \QQ(\sqrt{n_{01}})=\QQ(\sqrt{n_{12}})=\QQ(\sqrt{n_{23}})=\QQ.
\end{equation}
Proposition \ref{vectors} gives us once more
\begin{equation}\label{kl}
 v_1=(k+1,k,k),\	v_2=(l,l,l-1),\	v_3=(1,1,1),
\end{equation}
up to an action of $\mc{W}$. In our case, we have $k$ even and $l$ odd from \eqref{lengths}. \eqref{lengths} also gives
 $\sqrt{n_{02}}\notin\QQ(\sqrt{n_{12}},\sqrt{n_{23}})$ and $\sqrt{n_{13}}\notin\QQ(\sqrt{n_{01}},\sqrt{n_{12}})$, hence by \eqref{lowdegree5} and Proposition \ref{tau} we
 get
 \begin{equation}\label{lowdegree6}
  \sqrt{n_{02}}\cot2\pi\om_{02},\sqrt{n_{13}}\cot2\pi\om_{13}\in\QQ.
 \end{equation}
 Now consider $\tau$ to be the nontrivial automorphism of $\QQ(\sqrt{n_{01}},\sqrt{n_{02}},\sqrt{n_{03}})=\QQ(\sqrt{n_{02}},\sqrt{3})$ that fixes $\QQ(\sqrt{3})$
 and $\sig$ be the nontrivial automorphism of $\QQ(\sqrt{n_{03}},\sqrt{n_{13}},\sqrt{n_{23}})=\QQ(\sqrt{n_{13}},\sqrt{3})$ that fixes $\QQ(\sqrt{3})$, i.~e.
 \begin{equation}\label{autos}
  \tau(\sqrt{n_{02}})=-\sqrt{n_{02}},\	\sig(\sqrt{n_{13}})=-\sqrt{n_{13}},\	\tau(\sqrt{3})=\sig(\sqrt{3})=\sqrt{3}.
 \end{equation}
 Finally, let $N_1$ and $N_2$ be the number theoretic norms of the quadratic extensions $\QQ(\sqrt{n_{02}},\sqrt{3})/\QQ(\sqrt{3})$ and $\QQ(\sqrt{n_{13}},\sqrt{3})/\QQ(\sqrt{3})$,
 respectively. By Proposition \ref{tau} we have
 \begin{equation}\label{cotom02}
  \sqrt{n_{02}}\cot2\pi\om_{02}=\frac{-N_1(\cot2\pi\om_0)-1}{2(\sqrt{n_{01}n_{03}}+\scal{v_1,v_3})}
 \end{equation}
 and
 \begin{equation}\label{cotom13}
  \sqrt{n_{13}}\cot2\pi\om_{13}=\frac{-N_2(\cot2\pi\om_3)-1}{2(\sqrt{n_{03}n_{23}}+\scal{-v_3,v_2-v_3})}.
 \end{equation}
Both numerators and denominators of the fractions in \eqref{cotom02} and \eqref{cotom13} belong to $\QQ(\sqrt{3})$, hence by \eqref{lowdegree6}, the left-hand
sides of these equations are also equal to the ratio of the coefficients of $\sqrt{3}$ of the numerator and the denominator, when they are written as $\QQ$-linear
combinations of $1$ and $\sqrt{3}$. We have
\begin{equation}
 -N_1(\cot2\pi\om_0)-1=n_{02}(\sqrt{n_{01}n_{03}}+\scal{v_1,v_3})^2-(\scal{v_2,v_3}\sqrt{n_{01}}+\scal{v_1,v_2}\sqrt{n_{03}})^2-1,
\end{equation}
hence the coefficient of $\sqrt{3}$ is
\begin{equation}
2n_{02}\sqrt{n_{01}}\scal{v_1,v_3}-2\sqrt{n_{01}}\scal{v_1,v_2}\scal{v_2,v_3}, 
\end{equation}
while the coefficient of $\sqrt{3}$ of the denominator in \eqref{cotom02} is just $2\sqrt{n_{01}}$, which yields
\begin{equation}\label{cotom02kl}
 \cot2\pi\om_{02}=\frac{n_{02}\scal{v_1,v_3}-\scal{v_1,v_2}\scal{v_2,v_3}}{\sqrt{n_{02}}}=\frac{2k-l+1}{\sqrt{3l^2-2l+1}},
\end{equation}
by \eqref{kl}. Similarly,
\begin{equation}
 -N_2(\cot2\pi\om_3)-1=n_{13}(\sqrt{n_{03}n_{23}}+\scal{-v_3,v_2-v_3})^2-(\scal{-v_3,v_1-v_3}\sqrt{n_{23}}+\scal{v_2-v_3,v_1-v_3}\sqrt{n_{03}})^2-1,
\end{equation}
hence the coefficient of $\sqrt{3}$ is
\begin{equation}
2n_{13}\sqrt{n_{23}}\scal{-v_3,v_2-v_3}-2\sqrt{n_{23}}\scal{-v_3,v_1-v_3}\scal{v_2-v_3,v_1-v_3}, 
\end{equation}
while the coefficient of $\sqrt{3}$ of the denominator in \eqref{cotom13} is just $2\sqrt{n_{23}}$, which yields
\begin{equation}\label{cotom13kl}
 \cot2\pi\om_{13}=\frac{n_{13}\scal{-v_3,v_2-v_3}-\scal{-v_3,v_1-v_3}\scal{v_2-v_3,v_1-v_3}}{\sqrt{n_{13}}}=\frac{k-2l+2}{\sqrt{3k^2-4k+2}}.
\end{equation}
Equations \eqref{sumeven}, \eqref{cotom02kl}, and \eqref{cotom13kl} yield
\begin{equation}
 \frac{2k-l+1}{\sqrt{3l^2-2l+1}}=\frac{-k+2l-2}{\sqrt{3k^2-4k+2}}.
\end{equation}
Putting $x=k$, $y=-l+1$, the above becomes
\begin{equation}\label{xy}
 \frac{2x+y}{\sqrt{3y^2-4y+2}}=\frac{-x-2y}{\sqrt{3x^2-4x+2}}.
\end{equation}
The rest follows from:
\begin{prop}\label{solution}
 The only integer solution of the equation \eqref{xy} is $x=y=0$.
\end{prop}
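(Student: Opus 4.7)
My plan is to clear the (strictly positive) denominators, square, and reduce the resulting polynomial identity in $x,y$ to a one-variable question via symmetric coordinates, at which point the integrality hypothesis will finish things in a small finite case check.

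Concretely, squaring \eqref{xy} yields
\[
(2x+y)^2(3x^2-4x+2) = (x+2y)^2(3y^2-4y+2),
\]
both sides of which visibly coincide when $y=x$, so I expect the difference to carry a factor of $x-y$. A direct expansion should then yield the identity
\[
(2x+y)^2(3x^2-4x+2) - (x+2y)^2(3y^2-4y+2) = 2(x-y)\bigl[\,2Q(3S-1) - 3S(2S-1)\,\bigr],
\]
where $S := x+y$ and $Q := x^2+xy+y^2$. The case $x=y$ plugs back into \eqref{xy} to give $6x=0$, i.e.\ $x=y=0$; so the remaining question is whether $2Q(3S-1) = 3S(2S-1)$ admits any integer solutions with $x \neq y$.

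At this point I would use two elementary inequalities on $Q$: namely $Q \geq 0$, and $4Q-3S^2 = (x-y)^2 \geq 0$, i.e.\ $Q \geq \tfrac{3}{4}S^2$. For integer $S$ one has $3S-1 \neq 0$, so I can solve $Q = \frac{3S(2S-1)}{2(3S-1)}$. If $S \leq -1$ all of $S$, $2S-1$, $3S-1$ are negative and this formula gives $Q<0$, contradicting $Q \geq 0$. If $S = 0$ the original relation forces $Q = 0$, whence $x=y=0$, contradicting $x\neq y$. If $S \geq 1$, combining the formula with $Q \geq \tfrac{3}{4}S^2$ and clearing denominators reduces (after dividing by $3S > 0$) to $(3S-2)(S-1) \leq 0$, which for integer $S \geq 1$ forces $S=1$; but then $Q = 3/4 \notin \ZZ$, a final contradiction.

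The main obstacle is guessing the clean reformulation: the two-variable squared equation looks cluttered, but once passed through $(S,Q)$ the two nonnegativity constraints on $Q$ are just sharp enough to pin $S$ down to $S=1$, and integrality of $Q$ then closes the door. Spurious solutions from squaring are not a worry here, since the only integer pair that survives is $(0,0)$, which manifestly satisfies the original \eqref{xy}.
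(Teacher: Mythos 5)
Your argument is correct: I verified the claimed factorization
\[
(2x+y)^2(3x^2-4x+2)-(x+2y)^2(3y^2-4y+2)=2(x-y)\bigl[\,2Q(3S-1)-3S(2S-1)\,\bigr]
\]
by direct expansion (using $xy=S^2-Q$), and every subsequent sign and integrality step is sound; the denominators $3x^2-4x+2$ and $3y^2-4y+2$ are indeed always positive, so squaring and cross-multiplying is legitimate, and $(0,0)$ does satisfy the original equation. Your route differs from the paper's at the key middle step. Rather than cross-multiplying and expanding, the paper applies the mediant identity to the squared equation $\frac{(2x+y)^2}{3y^2-4y+2}=\frac{(x+2y)^2}{3x^2-4x+2}$: subtracting numerators and denominators (the factor $x-y$ cancels there as well) shows the common value equals $\frac{3(x+y)}{4-3(x+y)}$, whose nonnegativity immediately pins $S=x+y$ to $\{0,1\}$ with no expansion. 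You reach the same bottleneck $S\in\{0,1\}$ via the relation $2Q(3S-1)=3S(2S-1)$ combined with $Q\ge0$ and $Q\ge\tfrac34S^2$; this costs a polynomial identity but is more mechanical and makes the role of integrality explicit ($Q=\tfrac34\notin\ZZ$ kills $S=1$, where the paper instead finds $x=\tfrac12\notin\ZZ$). Both proofs then dispose of $S=0$ and of $x=y$ in essentially the same way. A minor point in your favor: the paper's mediant step tacitly requires the two denominators to be unequal, i.e.\ $3(x+y)\ne4$ --- automatic for integers, but a caveat your cross-multiplication avoids entirely.
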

\begin{proof}
If $x=y$, then we can easily see that we can only have $x=y=0$, so we may assume that $x\neq y$.
 Square both sides of \eqref{xy} to obtain
 \begin{equation}\label{xy2}
 \frac{(2x+y)^2}{3y^2-4y+2}=\frac{(x+2y)^2}{3x^2-4x+2}.
 \end{equation}
 Both sides are nonnegative and equal to
 \[
 \frac{(2x+y)^2-(x+2y)^2}{(3y^2-4y+2)-(3x^2-4x+2)}=\frac{3(x+y)}{4-3(x+y)},\]
hence we must have $x+y=0$ or $1$. If $x+y=1$, then both sides of \eqref{xy2} must be equal to $3$, hence
\[\frac{(x+1)^2}{3x^2-2x+1}=3,\]
whose only solution is $x=1/2$. Thus, $x+y=0$, hence by \eqref{xy} we have
\[\frac{x}{\sqrt{3x^2+4x+2}}=\frac{x}{\sqrt{3x^2-4x+2}},\]
which yields either $x=0$ or $3x^2+4x+2=3x^2-4x+2$. It is clear, that the only solution is $x=y=0$, as desired.
\end{proof}

Proposition \ref{solution} and \eqref{kl} give
\begin{equation}
 v_1=(1,0,0),\	v_2=(1,1,0),\	v_3=(1,1,1),
\end{equation}
which finally proves  Theorem \ref{converse}.

All such tetrahedra multi-tile $\RR^3$ by the action of the group $\mc{G}$, hence the converse is true in this special case.

\bigskip
\bigskip

\end{document}